\newcommandx{\comment}[2][1=]{\todo[linecolor=red,backgroundcolor=red!25,bordercolor=red,#1]{#2}}
\newtheorem{lemma}{Lemma}
\newtheorem{proposition}{Proposition}
\newtheorem{example}{Example}
\journal{ }
\begin{document}

\begin{frontmatter}
\title{On the Initial Algebra and Final Co-algebra of some Endofunctors on  Categories of  Pointed Metric Spaces}

\author[and]{Annanthakrishna Manokaran} \ead[and]{annanth84@gmail.com}
\author[rom]{Romaine Jayewardene} \ead{romaine.jayewardene@gmail.com} 
\author[rom]{Jayampathy Ratnayake} \ead{jratnaya@maths.cmb.ac.lk} 
\address[and]{Department of Mathematics and Statistics, University of Jaffna, Jaffna, Sri Lanka}
\address[rom]{Department of Mathematics, University of Colombo, Colombo 3, Sri Lanka}

\begin{abstract}
We consider two endofunctors of the form  $~F:X\longrightarrow M\otimes X~$, where $~M~$ is a non degenerate module, related to the unit interval and the Sierpinski gasket, and their final co-algebras. The functors are defined on the categories of bi-pointed and tri-pointed metric spaces, with continuous maps, short maps or Lipschitz maps as the choice of morphisms.   

First we demonstrate that the final co-algebra for these endofunctors on the respective category of pointed metric spaces with the choice of continuous maps is the final co-algebra of that with short maps and after forgetting the metric structure is of that in the set setting. We use the fact that the final co-algebra can be obtained by a Cauchy completion process, to construct the mediating morphism from a co-algebra by means of the limit of a sequence obtained by iterating the co-algebra. We also show that the Sierpinski gasket $~(\mathbb{S},\sigma)~$ is not the final co-algebra for these endofunctors when the morphism are restricted to being Lipschitz  maps.  
\end{abstract}


\end{frontmatter}
\section{Introduction}\label{introduction}
This paper considers  the  initial algebra and final co-algebra for two particular endofunctors $~F_i : \mathbf{C}\rightarrow \mathbf{C}~$, where $~\mathbf{C}~$ is the category of $i$-pointed  metric spaces with continuous maps ($~\mathbf{MS_{i}}^{C}~$), short maps ($~\mathbf{MS_{i}}^{S}~$) or  Lipschitz maps ($~\mathbf{MS_{i}}^{L}~$), and  $i=2,3$. The two functors are based on the unit interval $[0, 1]$ and the Sierpinski gasket. These definitions are motivated by \cite{Lein} and have been considered previously in \cite{Bhat} and \cite{Bhat2}. We need some definitions to start.\\

A bi-pointed set is a set having two distinguished elements. We denote such a set by a triple $~(X,\top,\bot)~$, where $\top$ and $\bot$ are the two distinguished elements. In a similar manner, a tri-pointed set, denoted by a quadruple  $~(X,T,L,R)~$, consists of a set and three distinguished elements. We will often omit the distinguished points from the description of the set and simply write ``Let $~X~$ be a tri-pointed set...". To differentiate distinguished points of two (or more) $i$-pointed sets $X$ and $Y$, we will use subscripts, such as $\bot_X$ and $~\top_Y~$ for $~\bot~$ of $~X~$ and $~\top~$ of $~Y~$ respectively (in the case of $i=2$).\\  

One can similarly define an $i$-pointed set for any $i \in \mathbb{N}$. There is a category of $i$-pointed sets, $~\mathbf{Set_{i}}$, whose objects are $i$-pointed sets and morphisms are functions which preserve the distinguished elements. An $i$-pointed metric space $~(X, d)~$ is an $i$-pointed set $~X=(X, x_1,...,x_i)~$ equipped with a one-bounded metric ($~d(x,y) \leq 1 \,\,\, \forall\, x,y\in X$) such that the distance between any pair of distinguished elements is 1. The class of $i$-pointed metric spaces can be raised to the categories $~\mathbf{MS_{i}}^{S}~$, $~\mathbf{MS_{i}}^{L}~$ and $~\mathbf{MS_{i}}^{C}~$, where the morphisms are respectively short maps, Lipschitz maps and continuous maps that preserves the distinguished elements. Note that  $~\mathbf{MS_{i}}^{L}~$ and $~\mathbf{MS_{i}}^{S}~$ are subcategories of $~\mathbf{MS_{i}}^{C}~$,  $~\mathbf{MS_{i}}^{S}~$ is a subcategory of $~\mathbf{MS_{i}}^{L}~$ and all these three categories ($~\mathbf{MS_{i}}^{S}~,~\mathbf{MS_{i}}^{L}~,~\mathbf{MS_{i}}^{L}~$) are subcategories of $~\mathbf{Set_{i}}~$. In this paper we will consider only the cases $i=2$ and $i=3$.\\

The functors $~F_{i} ~$ were defined, for example, in \cite{Bhat}, and we invite the reader to refer it for details. The functors $F_i$ are defined at the level of $~\mathbf{Set_{i}}~$ by $~F_i X_i =M_{i}\otimes X_i~$, for a particular set $M_i$. Here $~M_{i}\otimes X_i~$ is the set of equivalence classes of a particular equivalence relation defined on $~M_{i}\times X_i~$. We will denote the equivalence  class of an element $~(m, x)\in M_i \times X_i$ by $~m\otimes x~$. Details for the two cases $i=2$ and $i=3$ are given below.\\

For the bi-pointed case, we take $~M_{2}=\{l,r\}~$ and consider the equivalence relation on  $~M_{2}\times X~$ generated by the relation $~(l,\top)~\sim~(r,\bot)$. The set $~M_{2}\otimes X~$ is lifted to a bi-pointed set by choosing $~l\otimes \bot~$ and  $~r\otimes \top~$ as $~\bot_{M_2\otimes X}~$ and $~\top_{M_2\otimes X}~$ respectively. This description is based on Freyd's description of the unit interval $~[0,1]~$ as a final co-algebra (see \cite{Bhat}).\\

For the tri-pointed case, we take $~M_{3}=\{a,b,c\}~$ and consider the equivalence relation on  $~M_{3}\times X~$ generated by the relations $~(b,T)~\sim~(a,L)$, $~(a,R)~\sim~(c,T)~$ and $~(c,L)~\sim~(b,R)~$. The set $~M_{3}\otimes X~$ is lifted to a tri-pointed set by choosing $~a\otimes T~$, $~b\otimes L~$ and  $~c\otimes R~$ as $~T_{M_3\otimes X}~$, $~L_{M_3\otimes X}~$ and $~R_{M_3\otimes X}~$ respectively. This description glues three copies of $~X~$ as shown in the diagram and is based on the Sierpinski gasket (see \cite{Bhat}).\\

\begin{figure}
\centering
\includegraphics[scale=0.5]{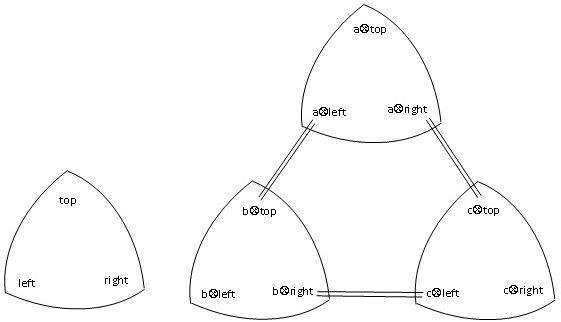}
\caption{Description of $F_3\,X$}
\end{figure}

Given a morphism $~f:X\rightarrow Y~$ of $i$-pointed sets, $~F_{i}f : M_{i}\otimes X\rightarrow  M_{i}\otimes Y~$ is given by $~F_if(m\otimes x)=m\otimes f(x)~$. This definition makes $~F_{i}~$ an endofunctor on $~\mathbf{Set_{i}}~$. One can easily verify that $~F_if~$ preserves the distinguished elements. For example, in the bi-pointed case we have $~F_{2}f(l\otimes \bot_{X})=l\otimes f(\bot_{X})=l\otimes \bot_{Y}~$ and $~F_{2}f(r\otimes \top_{X})=r\otimes f(\top_{X}) = r\otimes \top_{Y}~$.\\

Moreover, the above definitions give rise to endofunctors on $~\mathbf{MS_{i}}^{S}~$,   $~\mathbf{MS_{i}}^{L}~$ and $~\mathbf{MS_{i}}^{C}~$, where $i=2,3$; which we also identify by  $~F_{i}~$, as follows. First, for a given $i$-pointed metric space $~(X_{i},d)~$, $~M_{i}\times X_{i}~ $ is given the metric 

	$$d\left((m,x),(n,y)\right)=\left\{%
	\begin{array}{ll}
	\frac{1}{2}d(x,y), & \hbox{$m=n$;} \\\\
	1, & \hbox{$m\neq n$.}
	\end{array}%
	\right.$$ 

We now consider the quotient metric on $~M_{i}\otimes X_{i}~$. Though the quotient metric in general is only a pseudo-metric, in our case it is indeed a metric and the distance between two elements can be computed explicitly as follows (see \cite{Bhat} for proofs). 

	
  \begin{lemma} 
  \label{LemmaMetricOnTensor}
  The distance $~d(m\otimes x, n\otimes y)~$ is calculated by the following formulas.  
\\For the bi-pointed case: 
$$d(m\otimes x,n\otimes y)=\left\{%
	\begin{array}{ll}
	\frac{1}{2}d(x,y), & \hbox{$m=n$;} \\\\
	\frac{1}{2}d(x,\top)+\frac{1}{2}d(\bot,y), & \hbox{$m\neq n$.}
	\end{array}%
	\right.$$
For the tri-pointed case:
\begin{align*}
d((a\otimes x),(a\otimes y) &=
	\frac{1}{2}d(x,y) \\
d((a\otimes x),(b\otimes y) &=\frac{1}{2} \min \{ d(x,L)+d(T,y)~,~d(x,R)+1+d(R,y) \}
\end{align*}

A formula similar to the last one holds for other cases of $a$, $b$ with $a \neq b$.
\end{lemma}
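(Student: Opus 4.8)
The plan is to compute the quotient pseudometric on $M_i \otimes X_i$ directly from its definition as the largest pseudometric bounded above by the metric on $M_i \times X_i$, and then verify it is actually a metric. Recall that the quotient pseudometric is given by an infimum over chains: for points $p, q \in M_i \otimes X_i$,
$$\bar d(p,q) = \inf \sum_{k=1}^{n} d(u_k, v_k),$$
where the infimum ranges over all finite chains $p = u_1, v_1 \sim u_2, v_2 \sim \cdots \sim u_n, v_n = q$ with consecutive endpoints identified under the generating relation (and $d$ the metric on $M_i \times X_i$ described above). So the first step is to unwind what such chains look like, using that the generating identifications are exactly $(l,\top)\sim(r,\bot)$ in the bi-pointed case and the three rotational gluings in the tri-pointed case.

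First I would handle the bi-pointed case. If $m = n$, the trivial one-step chain gives $\bar d(m\otimes x, n\otimes y) \le \frac12 d(x,y)$; any chain that leaves the $m$-copy must cross the gluing point at least twice (out and back), and since each crossing ``costs'' $\frac12 d(\cdot,\top)$ or $\frac12 d(\bot,\cdot)$ plus the full length $1$ whenever it changes the $M_2$-coordinate, one checks a detour cannot beat $\frac12 d(x,y)$ — here the triangle inequality in $X$ and one-boundedness of $d$ do the work. If $m \ne n$, say $m = l, n = r$, the cheapest chain goes from $(l,x)$ to $(l,\top)$, crosses to $(r,\bot)$ at zero cost, then goes to $(r,y)$, yielding $\le \frac12 d(x,\top) + \frac12 d(\bot,y)$; and again any longer chain is no shorter. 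So $\bar d$ equals the claimed formula, and it is positive whenever $p \ne q$ because $d(x,\top) + d(\bot,y) > 0$ unless $x = \top$ and $y = \bot$, in which case $l\otimes\top = r\otimes\bot$ are already identified. This gives that $\bar d$ is a genuine metric.

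For the tri-pointed case the same method applies, but the bookkeeping is heavier because there are three generating identifications arranged cyclically, so a chain from $a\otimes x$ to $b\otimes y$ can reach the $b$-copy either via the direct $a$–$b$ gluing (through $a\otimes L \sim b\otimes T$) or by going the ``long way around'' through the $c$-copy (via $a\otimes R \sim c\otimes T$ then $c\otimes L \sim b\otimes R$), which is exactly the minimum of the two expressions in the statement, the extra $+1$ being the cost of traversing the $c$-copy between its two gluing vertices. I would enumerate the possible chain types, show that any chain reduces in length to one of these two, and take the minimum; the formulas for the remaining cases of $a \ne b$ follow by the evident symmetry (cyclic relabeling) of the construction.

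The main obstacle is the combinatorial argument that no chain with many back-and-forth crossings can beat the two or three canonical short chains — i.e., proving the infimum is attained by a ``simple'' chain. The key lemma there is that each time a chain changes its $M_i$-coordinate it pays a full unit of length (by definition of the metric on $M_i \times X_i$) except across the zero-cost gluing identifications, so any excursion into an extra copy contributes at least the through-distance of that copy between the relevant gluing vertices; combined with the triangle inequality this lets one prune crossings. Since the excerpt explicitly refers to \cite{Bhat} for the proofs, I would present this pruning argument in the bi-pointed case in full and then indicate that the tri-pointed case is analogous, the only new feature being the two competing routes that produce the $\min$.
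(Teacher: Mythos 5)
The paper offers no proof of this lemma at all --- it states the formulas and refers the reader to the cited reference for the proofs --- so there is nothing in the source to compare your argument against line by line. On its own merits, your approach is the standard and correct one: express the quotient pseudometric as an infimum of chain lengths, exhibit the canonical short chains (direct for $m=n$; through the single gluing point $l\otimes\top=r\otimes\bot$ for $m\neq n$; through either the direct $a$--$b$ vertex or the detour $a\otimes R=c\otimes T$, $c\otimes L=b\otimes R$ in the tri-pointed case), and then prune arbitrary chains down to these using the triangle inequality, one-boundedness (so a direct cross-copy step of cost $1$ never beats a gluing route), and the observation that an excursion into an extra copy costs at least the internal distance between its entry and exit gluing vertices --- which is exactly where the $+1$ in the tri-pointed formula comes from. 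Your positivity check (the formula vanishes only on pairs already identified) correctly upgrades the pseudometric to a metric. The only soft spot is that the pruning argument is described rather than executed: in the tri-pointed case you should say explicitly that a chain revisiting a copy, or winding around the cycle of three gluings more than once, can be shortened by the triangle inequality within each copy, so that an optimal chain passes through each copy at most once; this reduces the enumeration to the two routes appearing in the $\min$. Since you identify precisely this as the key lemma and the reduction is routine, I would not call it a gap in the approach, only in the write-up.
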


Given a ($i$-pointed) metric space $~(X_i, d)~$, we use the same letter $~d~$ to identify the metrics on $~M_i\times X_i~$ and $~M_i\otimes X_i$, since it is understood from the context to which $~d~$ we refer.\\

Consider the bi-pointed case. Clearly, by definition, the  metric on $~M_{2}\otimes X_{2}~$  is  one bounded and $~d(l\otimes \bot, r\otimes \top)=1~$. Therefore  $~M_{2}\otimes X_{2}~$ is a bi-pointed metric space and hence $~(M_{2}\otimes X_{2},d)~$  is an object in the category of bi-pointed metric spaces with any choice of morphisms (Lipschitz, short or continuous). One can similarly verify that 
$~M_{3}\otimes X~$ is a tri-pointed metric space with the metric given in Lemma \ref{LemmaMetricOnTensor}.\\


To define $~F_i~$ on the categories $~\mathbf{MS_{i}}^{C}~$ , $~\mathbf{MS_{i}}^{L}~$ and $~\mathbf{MS_{i}}^{S}~$, one needs to show that it acts on morphisms in the expected way. What we need follows from \ref{cnt}, \ref{lip} and \ref{shrt} of the following lemma.

\begin{lemma}
\label{presFunctions}
Let $~X~$ and $~Y~$ be two $i-$pointed metric spaces. If $~f: X \rightarrow Y~$  has any of the following properties then so does $~F_{i}f~$.
\begin{enumerate}[i).]
\item \label{cnt} Continuous
\item \label{lip} Lipschitz
\item \label{shrt} Short map
\item \label{isoemb} Isometric embedding
\end{enumerate}
\end{lemma}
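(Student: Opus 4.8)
The plan is to prove each of the four properties separately, but to observe first that they all follow from a single estimate relating $d(F_if(u), F_if(v))$ to the distances $d(x,y)$ appearing in the explicit formulas of Lemma \ref{LemmaMetricOnTensor}. Write a general element of $M_i \otimes X$ as $m \otimes x$ and recall $F_if(m\otimes x) = m \otimes f(x)$. The key point is that the formula for $d(m\otimes x, n\otimes y)$ is built out of terms of the form $\tfrac12 d(\cdot,\cdot)$ in $X$ (distances between coordinates, or between a coordinate and a distinguished point), combined only by addition and by taking minima; since $f$ preserves distinguished points, applying $F_if$ replaces each such term $\tfrac12 d(u,v)$ by $\tfrac12 d(f(u),f(v))$ while leaving the combinatorial shape of the formula untouched. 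So everything reduces to understanding how the building-block quantities $d(f(u),f(v))$ compare to $d(u,v)$, together with the elementary facts that $a \mapsto \tfrac12 a$ is monotone, that sums and minima of monotone (resp. non-expansive, resp. Lipschitz-with-constant-$K$) functions inherit the same property, and that $\min$ is itself $1$-Lipschitz in each argument.

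For \ref{shrt}, if $f$ is short then each block term satisfies $d(f(u),f(v)) \le d(u,v)$; feeding this into the formula for $M_i\otimes Y$ and comparing termwise with the formula for $M_i\otimes X$ gives $d(F_if(m\otimes x), F_if(n\otimes y)) \le d(m\otimes x, n\otimes y)$ — here one uses that if $p_j \le q_j$ for all $j$ then $\min_j p_j \le \min_j q_j$ and $\sum_j p_j \le \sum_j q_j$. For \ref{lip}, the same argument with $d(f(u),f(v)) \le K\, d(u,v)$ yields $d(F_if(u),F_if(v)) \le K\, d(u,v)$, because every block term scales by at most $K$ and the operations of addition, scaling by $\tfrac12$, and $\min$ all respect a uniform multiplicative bound (for $\min$ one notes $\min_j(K p_j) = K \min_j p_j$ when $K \ge 0$). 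Part \ref{shrt} is of course the special case $K=1$, but it is worth stating the Lipschitz case in the form that also records the constant. For \ref{isoemb}, if $f$ is an isometric embedding then $d(f(u),f(v)) = d(u,v)$ for all block terms, and one must check that the resulting formulas for $M_i \otimes Y$ restricted to the image of $F_if$ coincide exactly with those for $M_i\otimes X$; since all the coordinate distances and distances-to-distinguished-points are preserved verbatim, the two piecewise formulas agree term by term, so $F_if$ is an isometric embedding as well.

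For \ref{cnt}, continuity is not phrased in terms of a closed formula, so I would instead argue directly with $\varepsilon$–$\delta$, or more cleanly, decompose $F_if$ through the (short) quotient maps $q_X : M_i\times X \to M_i\otimes X$ and the product map $\mathrm{id}_{M_i}\times f : M_i\times X \to M_i \times Y$. Concretely, $F_if \circ q_X = q_Y \circ (\mathrm{id}_{M_i}\times f)$; the quotient maps are short (hence continuous) and surjective, $\mathrm{id}_{M_i}\times f$ is continuous whenever $f$ is (with the metric on $M_i\times X$ making each ``slice'' $\{m\}\times X$ a rescaled copy of $X$ and distinct slices at distance $1$), so $F_if$ is continuous on the quotient by the universal property of the quotient topology — equivalently, one checks continuity slice-by-slice at each point $m\otimes x$, noting that a small ball around $m\otimes x$ in $M_i\otimes X$ either stays within the image of the single slice $\{m\}\times X$ or, near a glued boundary point, is controlled by the finitely many slices meeting that point, on each of which $F_if$ acts as a rescaled copy of $f$.

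The main obstacle I expect is purely bookkeeping rather than conceptual: carefully handling the ``$\min$ of two expressions'' in the tri-pointed distance formula (and the unwritten symmetric cases alluded to after Lemma \ref{LemmaMetricOnTensor}) so that the termwise comparison is rigorous — in particular making sure that when $f$ is not injective the distances $d(f(u),f(v))$ that appear are still exactly the images of the corresponding terms, and that no new shorter ``path'' through the quotient is created that was not already accounted for by the formula. Since Lemma \ref{LemmaMetricOnTensor} already asserts those formulas are valid for every $i$-pointed metric space (including $M_i\otimes Y$), this is handled simply by invoking that lemma for both $X$ and $Y$ and doing the monotone/Lipschitz comparison of the resulting finite expressions; so the proof is short once the reduction in the first paragraph is set up.
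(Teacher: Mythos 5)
Your proposal is correct, and for parts (\ref{lip}), (\ref{shrt}) and (\ref{isoemb}) it is essentially the paper's argument: the paper proves the Lipschitz case by exactly the termwise comparison you describe (pulling the constant $k$ through the sum and the $\min$ in the tri-pointed formula of Lemma \ref{LemmaMetricOnTensor}), cites \cite{Bhat} for the short case, and declares the isometric-embedding case ``similar'' --- your unified observation that all three are instances of one monotone/homogeneous comparison of the same finite expression, using $f(L_X)=L_Y$ etc.\ to identify the distance-to-distinguished-point terms, is a cleaner packaging of the same computation. Where you genuinely diverge is continuity: the paper argues directly with $\epsilon$--$\delta$ on the formula (and only sketches the cross-slice case $a\otimes x$ vs.\ $b\otimes y$), whereas your primary route factors $F_if\circ q_X=q_Y\circ(\mathrm{id}_{M_i}\times f)$ and invokes the universal property of the quotient. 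The one point to be careful about there is that the metric on $M_i\otimes X$ is the quotient \emph{metric}, whose topology is in general only coarser than the quotient \emph{topology}, so the universal property does not apply verbatim; you need the additional (true, but not free) fact that for these finite identifications of points at positive mutual distance the two topologies coincide --- which is in effect what your ``slice-by-slice'' fallback verifies, and what the paper's direct $\epsilon$--$\delta$ estimate sidesteps entirely. With that caveat made explicit (or with the $\epsilon$--$\delta$ alternative carried out), the proposal is complete.
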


\begin{proof}
(\ref{cnt}). 
Let $~\epsilon>0~$ be arbitrary. Since $~f~$ is continuous, $~\exists\,\delta >0~$ such that $~d(f(x),f(y))<\epsilon~ $,  whenever $d(x,y  )<\delta $.  Choose  $~\delta_{0}=\min \{   \displaystyle\frac {\delta}{2},\displaystyle\frac {1}{4} \}$. Suppose that $~d_{M_i\otimes X}(a\otimes x, b\otimes y)<\delta_{0} ~$. Then we have $~d_{X}(x,L_{X})< \delta$ and $d_{X}(y,T_{X})<\delta $. By the continuity of $~f~$ we have $~d_{Y}(f(x),f(L_{X}))<\epsilon$ and $~d_{Y}(f(y),f(T_{X}))<\epsilon$. Thus, $~d_{M\otimes Y}(F_{i} f (a\otimes x), F_{i}f (b\otimes y) )<\epsilon ~$, which is the required condition for $~F_{i} f~$ to be continuous. This completes the proof.\\\\
(\ref{lip}).	Let $~f~$ be a Lipschitz continuous function with the Lipschitz constant $~k~$. Note that 
\begin{align*}
\text{m} & \text{in} \{ d\left(f(x),L\right)+d(T,f(y))~,~d(f(x),R)+1+d(R,f(y)) \} \\ 
   & =\ \min \{ d(f(x),f(L))+d(f(T),f(y))~,~d(f(x),f(R))+1+d(f(R),f(y)) \}\\
   & \leq  \min \{kd(x,L)+kd(T,y)~,~kd(x,R)+kd(T,L)+kd(R,y)\}\\
   & =  k \, \min \{d(x,L)+d(T,y)~,~d(x,R)+d(T,L)+d(R,y)\}
\end{align*}

Therefore, 
\[
d\left(F_{3} f(a\otimes x),F_{3} f(b\otimes y)\right) = d\left(a\otimes f(x), b\otimes f(y)\right) \leq k d\left(a\otimes x,b\otimes y\right) 
\]
This completes the proof.\\\\
(\ref{shrt}) is proved in \cite{Bhat} and the proof of (\ref{isoemb}) is similar to that of (\ref{lip}).
\end{proof}

One can now define endofunctors $~F_{i}~$ on $~\mathbf{MS_{i}}^{S}~$, $~\mathbf{MS_{i}}^{C}~$ and  $~\mathbf{MS_{i}}^{L}$, as supported by Lemma \ref{presFunctions}.\\     
 
Authors of  \cite{Bhat} have computed the initial algebra $\,(G_{i},h_{i})\,$ and the  final co-algebra $~(S_{i},\psi_{i})\,$ of $\,F_{i}\,$ on $\,\mathbf{MS_{i}}^{S}~$ and they have shown that the final co-algebra is obtained by the Cauchy completion of the initial algebra. It turns out that the initial algebra and the final co-algebra of $\,F_{i}\,$ on $~\mathbf{Set_{i}}~$   are the same as that of $~F_{i}~$ on   $~\mathbf{MS_{i}}^{S}~$ after forgetting the metric structure. Moreover, \cite{Bhat}  exhibits a bi-Lipschitz isomorphism between two  co-algebras of $\,F_{3}\,$ 
on $~\mathbf{MS_{3}}^{L}~$, one being the Sierpinski gasket, and  raises the question whether either of these  co-algebras of $~F_{3}~$ is the final co-algebra for the endofunctor $~F_{3}~$  on $~\mathbf{MS_{3}}^{L}~$. This study was initiated based on this question.\\


 Our contribution is in two directions. In Section \ref{secOnMSC},  we show that the  final co-algebra of  $~F_{i}~$ on $~\mathbf{MS_{i}^{S}}~$ is same as that on $~\mathbf{MS_{i}^{C}}~$. Along the way we show how the mediating morphism from a co-algebra to the final co-algebra can be obtained by the limit of a sequence obtained by iterating the co-algebra. In Section \ref{secOnInitialAlgebraMSC} we show that the initial algebra of $~F_{i}~$ on $~\mathbf{MS_{i}^{S}}~$ is not the initial algebra of $F_i$ on $~\mathbf{MS_{i}^{C}}$. We still do not know whether the initial algebra of $F_i$ on  $~\mathbf{MS_{i}^{C}}$ exists.  Moreover, in Section \ref{secOnMSL}, we show that the final co-algebra  and initial algebra  of $~F_{i}~$  on $~\mathbf{MS_{i}^{S}}~$ are not the final co-algebra and initial algebra of $~F_{i}~$ on $~\mathbf{MS_{i}^{L}}~.$  In the case of $~F_{i}$ on $~\mathbf{MS_{i}^{L}}~$, we do not know whether the final co-algebra and initial algebra exist. However, the results suggests a negative answer to the question that was raised  in \cite{Bhat}, and mentioned in the above paragraph.
 


\section{ Final co-algebra for  $~F_{i}~$ on $~MS_{i}^{C}$} 
\label{secOnMSC}

In this section we consider $~F_{i}~$ defined on $~\mathbf{MS_{i}^{C}}~$. Authors of \cite{Bhat} have computed the initial algebra and final co-algebra of the endofunctor $~F_{i}~$ on  $~\mathbf{MS_{i}^{S}}~$ and shown that the final co-algebra is given by the Cauchy completion of the initial algebra. Let us briefly recall how this is done. Consider the following initial chain starting from the initial object $I$, where all the maps are isometric embeddings.
\[I~\xrightarrow{!}~ M_i\otimes I ~\xrightarrow{M_i\otimes !}~ M_i^{2}\otimes I ~\xrightarrow{M_i^{2}\otimes !} ~ M_i^{3}\otimes I\cdots M_i^{n}\otimes ! ~\xrightarrow{M_i^{n}\otimes !}~ M_i^{n+1}\otimes I \cdots\]

Take $\,C=\bigcup M_i^{n}\otimes I$. Define a relation on $C$ as follows. Let $x,y \in C$. Then$~x\in M_i^{r}\otimes I $ and $y\in M_i^{s}\otimes I ~$  for some $~r,s$. Without lost of generality take $s>r$. The relation is defined by $x \approx y~$ iff $f(x)=y$; where $f=M_i^{s-1}\otimes !\circ \cdots  \circ M_i^{r}\otimes !$. Let $\sim$ be equivalence closure of $\approx$. Take $G=C / \sim$. This $~G~$ is the colimit of the above chain and also the carrier set of the initial algebra. The morphisms arising in the colimit are given by  $C_{n}:M_i^{n}\otimes I \longrightarrow G$, where $C_{n}(x)=[x]$, which are  isometric embeddings. By Adamek's Theorem (see \cite{Adamek}) the initial algebra is given by $(G,g:M_i\otimes G\longrightarrow G)$, where $g:M_i\otimes G\longrightarrow G$ is given by $g(m\otimes [x])=[m\otimes x]$. The carrier set of the final co-algebra is the Cauchy completion of $~G~$ which we denote by $\,S\,$. Throughout this paper we will consider $\,G\,$ as a dense subset of the complete metric space $S$.\\

Authors of \cite{Bhat} have also shown that the initial algebra and final co-algebra of $\,F_{i}\,$ on  $\,\mathbf{Set_{i}}\,$ are the same as that of $\,F_i\,$ on $\,\mathbf{MS_{i}^{S}}\,$, leaving out the metric structure.  One can make use of this fact to compute  the mediating morphism at the set level from a given co-algebra to  the final coalgebra, as the limit of a sequence. We demonstrate it for the tri-pointed case, as the construction for the bi-pointed case is similar. Let $\,(X,e)\,$ be any co-algebra for $\,F_3\,$ on $\mathbf{Set_3}$, where $~e~$ is a  set function. By iterating this coalgebra, we obtain the following chain.
 \[~X \xrightarrow{e}M_3\otimes X\xrightarrow{M_3\otimes e}M_3^{2}\otimes X \xrightarrow{M_3^{2}\otimes e}M_3^{3}\otimes X\cdots M_3^{n-1}\otimes X \xrightarrow{M_3^{n-1}\otimes e}M_3^{n}\otimes X\cdots\]
Set $M_3^0\otimes X=X$ and $M_3^0\otimes e=e$. For an $~x\in X~$, we can iterate this $x$ and obtain the sequence $\left(\chi_n \right)_{n=0,1,\cdots}$ such that $\chi_n = m_{1}\otimes m_{2}\otimes...\otimes m_n  \otimes x_{n} \in M_3^n\otimes X$, given by $\chi_0=x$, $\chi_1=M_3^0\otimes e(\chi_0) = e(x)$ and $\chi_n$ is given inductively by  $\chi_n = \ M_3^{n-1} \otimes e \left ( \chi_{n-1}\right) =\ M_3^{n-1} \otimes e \left (m_{1}\otimes m_{2}\otimes...\otimes m_{n-1}  \otimes x_{n-1}\right)$. Then we have a corresponding sequence  $~(\theta_{n})~$ in $~G~$ as follows. $\theta_{1}=[m_{1}\otimes \overline{x_{1}}]\, , \, \theta_{2}=[m_{1}\otimes m_{2}\otimes  \overline{x_{2}}]\, , \, \theta_{3}=[m_{1}\otimes m_{2}\otimes m_{3} \otimes \overline{x_{3}}],\cdots  $; where $~\overline{x_{i}}~$ is chosen to be $~T~$ or $~L~$ or $~R~$ provided that the value of $~m_{i}~$ is $~a~$  or $~b~$  or $~c~$ respectively. Since $~G~$ is also the carrier set arising in the initial algebra of  $~F_{i}~$ on $~\mathbf{MS_{i}^{S}}~$, it has a metric structure.  Now we shall prove that the sequence $~(\theta_{n})~$ is a Cauchy sequence in $~G~$.\\
Let $\epsilon>0\,$ and choose $\,N\in\mathbf{N}$ such that $\,N>\displaystyle\frac{\ln \frac{1}{\epsilon}}{\ln 2}\,$. Now for $\,q>p>N$, consider $d_{G}(\theta_{p},\theta_{q}  )=d_{G}([m_{1}\otimes \cdots m_{p} \otimes \overline{x_{p}}],[m_{1}\otimes  \cdots \otimes m_{q} \otimes \overline{x_{q}}]  )$. The right side of this equality is equal to  $~d_{G}([m_{1}\otimes \cdots m_{p}\otimes l_{p+1}\otimes\cdots  \otimes l_{q}\otimes \overline{y_{q}}],[m_{1}\otimes  \cdots \otimes m_{q} \otimes \overline{x_{q}}]  )~$; where $~m_{1}\otimes \cdots m_{p}\otimes l_{p+1}\otimes\cdots  \otimes l_{q}\otimes \overline{y_{q}}~\sim~ m_{1}\otimes  \cdots \otimes m_{q} \otimes \overline{x_{q}}~$. As $C_{q}$ defined above is an isometric embedding ,$~d_{G}([m_{1}\otimes \cdots m_{p}\otimes l_{p+1}\otimes\cdots  \otimes l_{q}\otimes \overline{y_{q}}],[m_{1}\otimes  \cdots \otimes m_{q} \otimes \overline{x_{q}}]  )$ is equal to $~d_{M^q\otimes I}(m_{1}\otimes \cdots m_{p}\otimes l_{p+1}\otimes\cdots  \otimes l_{q}\otimes \overline{y_{q}},m_{1}\otimes  \cdots \otimes m_{q} \otimes \overline{x_{q}}  )$ which is less than or equal to $~\frac{1}{2^p}~$ (see Lemma 15 of \cite{Bhat}). But  $~\frac{1}{2^p}<\frac{1}{2^N}<\epsilon~$. Thus we have $d_{G}(\theta_{p},\theta_{q}  )<\epsilon$ which is the required condition for $(\theta_{n})$ to be a Cauchy sequence. Thus one can consider the $\displaystyle\lim_{n\rightarrow \infty}\theta_{n}~$ in $S$. 

\begin{lemma}
$\displaystyle\lim_{n\rightarrow \infty}\theta_{n}~$ is independent of the choice of the sequence $\left(\chi_n \right)$.
\end{lemma}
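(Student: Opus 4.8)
The plan is to show that the limit $\lim_{n\to\infty}\theta_n$ depends only on the element $x\in X$ we started iterating, and not on which particular representatives $\chi_n = m_1\otimes\cdots\otimes m_n\otimes x_n$ of the equivalence classes we selected at each stage. First I would carefully identify where choice enters the construction: given $\chi_{n-1}$, applying $M_3^{n-1}\otimes e$ produces an element of $M_3^n\otimes X$, but writing it in the form $m_1\otimes\cdots\otimes m_n\otimes x_n$ requires picking a representative of an equivalence class in $M_3\times(M_3^{n-1}\otimes X)$, and then $\overline{x_n}$ is determined by $m_n$. So two admissible sequences $(\chi_n)$ and $(\chi_n')$ starting from the same $x$ satisfy $\chi_n \sim \chi_n'$ as elements of $M_3^n\otimes X$ for every $n$ (they are literally equal in the quotient set), with possibly different chosen words $m_1\otimes\cdots\otimes m_n$ versus $m_1'\otimes\cdots\otimes m_n'$.

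Next I would estimate $d_G(\theta_n,\theta_n')$, where $\theta_n = [m_1\otimes\cdots\otimes m_n\otimes\overline{x_n}]$ and $\theta_n' = [m_1'\otimes\cdots\otimes m_n'\otimes\overline{x_n'}]$. The key observation is that $\theta_n$ and $\theta_n'$ are both within distance $\tfrac{1}{2^n}$ of the ``same point'': more precisely, since $\chi_n$ and $\chi_n'$ represent the same element of $M_3^n\otimes X$, and since the points $m_1\otimes\cdots\otimes m_n\otimes\overline{x_n}$ and $m_1\otimes\cdots\otimes m_n\otimes x_n$ in $M_3^n\otimes X$ differ by replacing the last coordinate $x_n$ with a distinguished point $\overline{x_n}$ at distance $\le 1$, one gets (by the same scaling argument as in the Cauchy estimate above, using Lemma~\ref{LemmaMetricOnTensor} iterated $n$ times, or Lemma 15 of \cite{Bhat}) that the image of $\theta_n$ in any common refinement $M_3^N\otimes I$ with $N\ge n$ lies within $\tfrac{1}{2^n}$ of the image of $\chi_n$'s class, and likewise for $\theta_n'$. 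Combining via the triangle inequality, $d_G(\theta_n,\theta_n')\le \tfrac{1}{2^n}+\tfrac{1}{2^n}=\tfrac{1}{2^{n-1}}\to 0$. Since $S$ is the completion of $G$ and both $(\theta_n)$, $(\theta_n')$ are Cauchy, $d_S\big(\lim\theta_n,\lim\theta_n'\big)=\lim d_G(\theta_n,\theta_n')=0$, so the two limits coincide.

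The main obstacle I expect is bookkeeping: making precise the claim that $\theta_n$ is close to ``the point determined by $\chi_n$'' when $\chi_n$ itself lives in $M_3^n\otimes X$ rather than in $G$. There is no canonical map $M_3^n\otimes X\to G$ (that would be the mediating morphism we are trying to build!), so the comparison has to be done more carefully — either by noting that $\theta_n$ and $\theta_{n+1}$ are close (distance $\le \tfrac{1}{2^n}$) purely within the chain, so that changing the representative $\chi_n$ only affects which ``branch word'' $m_1\otimes\cdots\otimes m_n$ is read off, and that two representatives of the same class in $M_3^n\otimes X$ can differ only in ways that the equivalence relation glues together, which in turn places $\theta_n$ and $\theta_n'$ in a position where they agree after at most finitely many more refinement steps up to an error $\le \tfrac{1}{2^n}$. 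I would phrase this by picking $N\ge n$ large, pushing everything forward along $M_3^{N-1}\otimes\,!\circ\cdots$ into $M_3^N\otimes I$ where $C_N$ is an isometric embedding, and doing the estimate there. Once that lemma-level estimate ($d_G(\theta_n,\theta_n')\le \tfrac{1}{2^{n-1}}$) is in hand, the conclusion is immediate from completeness and the continuity of the metric.
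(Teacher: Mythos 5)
Your argument is correct, but it takes a genuinely different route from the paper's. The paper proves the stronger statement that the two sequences agree \emph{term by term}: it observes that two admissible readings of $\chi_n$ can differ only when the iteration lands on a glued point (e.g.\ $a\otimes L=b\otimes T$), and then checks by hand that the resulting words, such as $[a\otimes b\otimes\cdots\otimes b\otimes L]$ and $[b\otimes a\otimes\cdots\otimes a\otimes T]$, are identified in $G$ by the defining relations, so $\theta_n=\theta_n'$ exactly and the limits trivially coincide (the paper works this out only for a discrepancy in the first letter and dismisses the rest with a ``similarly''). You instead prove only that $d_G(\theta_n,\theta_n')\le 2^{-(n-1)}\to 0$ and invoke completeness of $S$; this is weaker but entirely sufficient, and it handles a discrepancy arising at any stage uniformly, without the case analysis. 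One refinement to your bookkeeping: pushing forward into $M_3^N\otimes I$ does not quite work, since $\chi_n$ lives in $M_3^n\otimes X$ and never in any $M_3^N\otimes I$; the clean way to realize your estimate is the device the paper itself uses later when proving continuity of the mediating morphism, namely that $C_n:M_3^n\otimes I\to G$ and $M_3^n\otimes\mu:M_3^n\otimes I\to M_3^n\otimes X$ are isometric embeddings. These let you compute $d_G(\theta_n,\theta_n')$ as a distance in $M_3^n\otimes X$ between $m_1\otimes\cdots\otimes m_n\otimes\overline{x_n}$ and $m_1'\otimes\cdots\otimes m_n'\otimes\overline{x_n'}$, where the triangle inequality through the common element $\chi_n$ gives $\tfrac{1}{2^n}+0+\tfrac{1}{2^n}$, because altering only the last coordinate scales distances by $2^{-n}$ (Lemma~\ref{LemmaMetricOnTensor} iterated). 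In short, your approach buys robustness and generality at the cost of concluding only equality of the limits rather than of the sequences themselves.
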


\begin{proof}
Suppose we consider two choices for $\chi_n$, $\chi_n=m_{1}\otimes m_{2}\otimes\cdots\otimes m_n  \otimes x_{n}=m_{1}^{'}\otimes m_{2}^{'}\otimes\cdots\otimes m_{n}^{'}  \otimes y_{n}$. Thus initially we have  $~m_1\otimes x_1=m_{1}'\otimes y_{1}~$. Without loss of generality take $m_{1}=a$. Then there are two possibilities for $m_{1}^{'}$ to be, namely $b$ or $c$, and we have $a\otimes L=b\otimes T$ or $a\otimes R=c\otimes T$ respectively. Consider the case $~a\otimes L=b\otimes T~$. Then the corresponding sequences $~\theta_{n}\,, \,\theta_{n}^{'}\in G~$  given by $~\theta_1=[a\otimes L]\,,\,\theta_2=[a\otimes b\otimes L]\,,\,\theta_{n}=[a\otimes b\otimes \cdots b\otimes L],\cdots$ and  $~\theta_{1}^{'}=[b\otimes T]\,,\,\theta_2^{'}=[b\otimes a\otimes T]\,,\,\theta_{n}^{'}=[b\otimes a\otimes \cdots a\otimes T],\cdots$ are equal. Similarly we can show $~\theta_n=\theta_{n}^{'}~$ for the other cases. Thus their limits are the same. 
\end{proof}

Thus one can define a function $\,f:X\rightarrow S\,$ by $~f(x)=\displaystyle\lim_{n\rightarrow \infty}\theta_{n}~$, which is well defined according to the above lemma.

\begin{proposition} 
The mediating morphism $~f~$  for a given co-algebra is  given by $~f(x)=\displaystyle\lim_{k\rightarrow \infty}\theta_{k}~$. 
\end{proposition}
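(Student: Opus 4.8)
The plan is to exploit finality. Since, after forgetting the metric structure, $(S,\psi)$ is the final co-algebra of $F_3$ on $\mathbf{Set_3}$ (as recalled above), there is a \emph{unique} co-algebra homomorphism $(X,e)\to(S,\psi)$; hence it suffices to check that $f(x)=\lim_{k\to\infty}\theta_k$ --- which is well defined by the previous lemma --- is a co-algebra homomorphism in $\mathbf{Set_3}$, i.e.\ that it preserves the distinguished points and that $\psi\circ f=F_3 f\circ e$. Here $\psi\colon S\to M_3\otimes S$ is the unique continuous extension of $g^{-1}$ to the completion $S$ of $G$, and $g\colon M_3\otimes G\to G$, $g(m\otimes[y])=[m\otimes y]$, is the initial-algebra structure (an isometric isomorphism, cf.\ \cite{Bhat}). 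I will use freely that the $C_n\colon M_3^n\otimes I\hookrightarrow G\hookrightarrow S$ are isometric embeddings, that $g\bigl(m\otimes C_{n-1}(y)\bigr)=C_n(m\otimes y)$, and that $\psi$ and each map $z\mapsto m\otimes z\colon S\to M_3\otimes S$ are continuous, so that limits pass through them.

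For the distinguished points, take $x=T_X$. As $e$ is a morphism of $\mathbf{Set_3}$, $e(T_X)=a\otimes T_X$, and iterating gives $\chi_n=a\otimes\cdots\otimes a\otimes T_X$, so every $m_i=a$ and $\theta_n=[a\otimes\cdots\otimes a\otimes T_I]$. Since the unique map $!\colon I\to M_3\otimes I$ sends $T_I$ to $a\otimes T_I$, we have $T_I\approx a\otimes T_I\approx\cdots$, whence $\theta_n=T_G=T_S$ for all $n$ and $f(T_X)=T_S$. The arguments for $L_X$ and $R_X$ are identical, so $f$ preserves the distinguished points.

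For the square, write $e(x)=m_1\otimes x_1$ and let $(\chi_k^{(1)})$, $(\theta_k^{(1)})$ be the sequences obtained by iterating $x_1$. A short induction using $M_3^k\otimes e=M_3\otimes(M_3^{k-1}\otimes e)$ and the injectivity of $z\mapsto m_1\otimes z$ shows $\chi_{k+1}=m_1\otimes\chi_k^{(1)}$, and therefore, after choosing representatives and applying $C_{k+1}$, $\theta_{k+1}=g\bigl(m_1\otimes\theta_k^{(1)}\bigr)$, i.e.\ $g^{-1}(\theta_{k+1})=m_1\otimes\theta_k^{(1)}$. Letting $k\to\infty$ and using continuity of $\psi$ (which restricts to $g^{-1}$ on $G$) and of $z\mapsto m_1\otimes z$, the computation runs
\[
\psi\bigl(f(x)\bigr)=\psi\Bigl(\lim_k\theta_{k+1}\Bigr)=\lim_k g^{-1}(\theta_{k+1})=\lim_k m_1\otimes\theta_k^{(1)}=m_1\otimes\lim_k\theta_k^{(1)}=m_1\otimes f(x_1),
\]
while on the other hand $F_3 f\bigl(e(x)\bigr)=F_3 f(m_1\otimes x_1)=m_1\otimes f(x_1)$. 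Thus the square commutes, $f$ is a co-algebra homomorphism, and by finality it is the mediating morphism.

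The main obstacle is the displayed identity in the third paragraph: one must set up the bookkeeping relating the iteration of $x$ to that of its successor $x_1$ with care --- in particular verifying that the representatives $\overline{x_k}$ agree on both sides and that $z\mapsto m_1\otimes z$ is injective, so that $\chi_k^{(1)}$ really is the tail of $\chi_{k+1}$ --- and one must know that the structure maps involved are continuous and that $\psi$ coincides with $g^{-1}$ on the dense subspace $G$; these are precisely the facts about $(G,g)$, $(S,\psi)$ and $F_3$ recorded above and in \cite{Bhat}. Once these are in place, the distinguished-point check and the appeal to finality are routine.
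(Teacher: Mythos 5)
Your proof is correct and takes essentially the same route as the paper's: both verify $\psi\circ f=(M_i\otimes f)\circ e$ by identifying $\theta_{k+1}$ with $m_1\otimes\theta_k^{(1)}$ (the sequence obtained by iterating $x_1$) and passing the limit through $\psi$ and through $z\mapsto m_1\otimes z$. Your additional checks (preservation of distinguished points, explicit appeal to finality) are sound but routine; the paper leaves the former implicit and treats uniqueness in a separate remark after establishing continuity of $f$.
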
 

\begin{proof}
We only need to prove that $\,\left( M_i\otimes f \right) \circ e=\psi \circ f\,$ to show that the following diagram commutes, where  $\,\psi:S\longrightarrow M\otimes S\,$ is given by $\psi([m_{1}\otimes\cdots\otimes m_k  \otimes \overline{x_{k}}])=m_{1}\otimes [m_{2}\otimes\cdots\otimes m_k  \otimes \overline{x_{k}}]$. Notice that $\,S\,$ is the Cauchy completion of the $\,G\,$ and we consider $\,G\,$ as the dense subset of $\,S\,$. Thus $\psi$ found in \cite{Bhat} is written above form.

$$\begin{array}{ccc}
X & \underrightarrow{~~~~ e ~~~~} &  M_i\otimes X\\\\
\downarrow{ f} &  & \downarrow{  M_i\otimes f}\\\\
S & \overrightarrow{~~~~ \psi~~~~} & M_i\otimes S \\
\end{array}$$
For $\,x\in X\,$ and $e(x)=m_{1}\otimes x_{1}\,,\, \left( M_i\otimes e\right) (m_{1}\otimes x_{1})=m_{1}\otimes m_{2}\otimes x_{2}, \cdots $. Then we have $f(x)=\displaystyle\lim_{k\rightarrow \infty}[m_{1}\otimes\cdots\otimes m_k  \otimes \overline{x_{k}}]$ and $f(x_{1})=\displaystyle\lim_{k\rightarrow \infty}[ m_{2}\otimes\cdots\otimes m_k  \otimes \overline{x_{k}}]$. Now $\psi \left(f(x)\right)=\psi \left(\displaystyle\lim_{k\rightarrow \infty}[m_{1}\otimes\cdots\otimes m_k  \otimes \overline{x_{k}}]\right)=\displaystyle\lim_{k\rightarrow \infty}\psi([m_{1}\otimes\cdots \otimes \overline{x_{k}}])=\displaystyle\lim_{k\rightarrow \infty}\left( m_{1}\otimes [m_{2}\otimes\cdots\otimes m_k  \otimes \overline{x_{k}}]\right)$. Now  $M_i\otimes f e(x)=M_i\otimes f(m_{1}\otimes x_{1})=m_{1}\otimes f(x_{1})~$ which is equal to $~m_{1}\otimes \displaystyle\lim_{k\rightarrow \infty}[ m_{2}\otimes\cdots\otimes m_k  \otimes \overline{x_{k}}]$. But $m_{1}\otimes \displaystyle\lim_{k\rightarrow \infty}\theta_{k}=\displaystyle\lim_{k\rightarrow \infty}m_{1}\otimes\theta_{k}$. Thus the above diagram commutes.  
\end{proof} 

Now suppose that $(X, e)$ is an $~F_i~$ co-algebra on $~\mathbf{MS_{i}^{C}}~$. Then leaving out the metric structure, we can calculate the mediating morphism at the set level as given above. The following lemma states that this mediating morphism $~f~$ is continuous in the metric setting for both the bi-pointed case and the tri-pointed case. We prove only the tri-pointed case as the bi-pointed case is similar.  

\begin{proposition}
Let $\,I\,$ be the initial object and $\,X\,$ be any object in the category $\textbf{MS}_{i}^{C}$. Let $\mu$ be the  unique morphism $\mu:I\longrightarrow X$(by initiality condition). Then $\mu$ and all $ M_{i}^{n}\otimes \mu$'s are isometric embeddings.   
\end{proposition}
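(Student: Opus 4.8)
The plan is to identify the initial object $I$ concretely, verify directly that the unique map out of it is distance-preserving, and then push this property through the functor $F_i$ by induction. Concretely, I claim $I$ is (isomorphic to) the $i$-point metric space $P$ whose underlying set is exactly the collection of $i$ distinguished points, with any two distinct ones at distance $1$. Indeed $P$ is an object of $\mathbf{MS}_i^C$ --- its metric is one-bounded and its distinguished points are pairwise at distance $1$ --- and for every object $X$ the only distinguished-point-preserving map $P\to X$ is the one sending the $j$-th distinguished point of $P$ to the $j$-th distinguished point of $X$; this map is automatically continuous since $P$ is finite and hence carries the discrete topology. So $P$ is initial, whence $I\cong P$ and we may take $I=P$. (This is also the initial object generating the initial chain recalled at the beginning of this section.)

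Given this, $\mu\colon I\to X$ sends the $i$ distinguished points of $I$ to the $i$ distinguished points of $X$. These images are distinct, being at distance $1$ in $X$, so $\mu$ is injective; and every pair of distinct points of $I$ is at distance $1$, while their images are distinguished points of $X$ and hence also at distance $1$. Therefore $\mu$ preserves all distances, i.e. $\mu$ is an isometric embedding.

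It remains to handle the iterates. We have $M_i^{0}\otimes\mu=\mu$, an isometric embedding by the previous paragraph, and if $M_i^{n}\otimes\mu$ is an isometric embedding then so is $M_i^{n+1}\otimes\mu=F_i(M_i^{n}\otimes\mu)$ by Lemma \ref{presFunctions}(\ref{isoemb}); induction on $n$ then yields the claim for every $n$. No step here is genuinely hard; the only place meriting care is the identification of $I$ with $P$ --- specifically, checking that the forced map out of $P$ really is a morphism of $\mathbf{MS}_i^C$, which is immediate because $P$ is finite and discrete --- after which everything reduces to part (\ref{isoemb}) of Lemma \ref{presFunctions}.
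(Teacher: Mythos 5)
Your proof is correct and follows the same route as the paper's (very terse) argument: verify directly that $\mu$ is an isometric embedding, then apply part (\ref{isoemb}) of Lemma \ref{presFunctions} inductively to the iterates $M_i^n\otimes\mu = F_i^n(\mu)$. Your explicit identification of the initial object as the $i$-point space with pairwise distances $1$ merely fills in the step the paper dismisses as ``straightforward,'' so the two proofs are essentially identical.
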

Proving $\mu$ is an isometric embedding is straightforward. The other part follows from Lemma \ref{presFunctions}.
\begin{lemma}
	The mediating morphism $~f~$, defined above, is continuous.
\end{lemma}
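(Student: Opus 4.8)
The plan is to realize $f$ as the unique fixed point of a $\tfrac12$-contraction that preserves continuity, and then to invoke the contraction mapping principle. The input is the commuting square established in the Proposition above, which says precisely that $f=\psi^{-1}\circ(M_i\otimes f)\circ e$, where $\psi:S\to M_i\otimes S$ is the structure map of the final co-algebra. I first record two facts about $\psi$. Since $(S,\psi)$ is the final co-algebra of $F_i$ on $\mathbf{MS}_{i}^{S}$, Lambek's lemma makes $\psi$ an isomorphism in $\mathbf{MS}_{i}^{S}$; and an isomorphism in $\mathbf{MS}_{i}^{S}$ is automatically an isometry, since a short bijection whose inverse is also short must preserve distances. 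Hence $\psi^{-1}:M_i\otimes S\to S$ is a well-defined isometric bijection.

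Next I introduce the operator $\Phi$, sending an arbitrary map $h:X\to S$ to $\Phi(h)=\psi^{-1}\circ(M_i\otimes h)\circ e$, and establish three properties. First, $\Phi$ preserves continuity: if $h$ is continuous then $M_i\otimes h=F_i h$ is continuous by part (\ref{cnt}) of Lemma \ref{presFunctions}, and $e$ and $\psi^{-1}$ are continuous, so $\Phi(h)$ is continuous. Second, $\Phi$ is a $\tfrac12$-contraction for the supremum metric $d_\infty(h_1,h_2)=\sup_{x\in X}d_S(h_1(x),h_2(x))$, which is finite because $S$ has diameter at most $1$: fixing $x\in X$ and a representative $e(x)=m\otimes z$, the images $(M_i\otimes h_1)(e(x))=m\otimes h_1(z)$ and $(M_i\otimes h_2)(e(x))=m\otimes h_2(z)$ share the first tensor factor, so by Lemma \ref{LemmaMetricOnTensor} and the isometry property of $\psi^{-1}$,
$$d_S\bigl(\Phi(h_1)(x),\Phi(h_2)(x)\bigr)=d_{M_i\otimes S}\bigl(m\otimes h_1(z),\,m\otimes h_2(z)\bigr)=\tfrac12\,d_S\bigl(h_1(z),h_2(z)\bigr)\le\tfrac12\,d_\infty(h_1,h_2),$$
and taking the supremum over $x$ gives $d_\infty(\Phi(h_1),\Phi(h_2))\le\tfrac12\,d_\infty(h_1,h_2)$. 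Third, the space $C(X,S)$ of continuous maps and the space of all maps $X\to S$ are both complete under $d_\infty$ — since $S$ is complete and a uniform limit of continuous maps is continuous — and both are nonempty, as they contain the constant maps.

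The conclusion then follows: by the Banach fixed point theorem $\Phi$ has a unique fixed point $f_c$ in $C(X,S)$, and this is also the unique fixed point of $\Phi$ in the larger complete space of all maps $X\to S$, on which $\Phi$ is again a $\tfrac12$-contraction. By the displayed identity $f=\psi^{-1}\circ(M_i\otimes f)\circ e$ of the first paragraph, $f$ is a fixed point of $\Phi$; hence $f=f_c$ is continuous. (Equivalently, one may start from a constant map $h_0$ and observe that $\Phi^n(h_0)$ is continuous for all $n$ by the first property and converges uniformly to $f$ by the second.)

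The only genuinely delicate ingredient is the pair of facts about $\psi$ — that it is an isometric isomorphism — which rest on the finality of $(S,\psi)$ in $\mathbf{MS}_{i}^{S}$ together with Lambek's lemma. The contraction estimate is forced by the $\tfrac12$-scaling built into the metric on $M_i\otimes(-)$, the preservation of continuity under $\Phi$ is exactly the content that part (\ref{cnt}) of Lemma \ref{presFunctions} was set up to supply, and the remainder is a routine application of the contraction mapping principle.
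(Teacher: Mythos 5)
Your overall strategy --- realizing $f$ as the fixed point of the operator $\Phi(h)=\psi^{-1}\circ(M_i\otimes h)\circ e$ and extracting continuity from uniform convergence of the iterates --- is genuinely different from the paper's proof, which is a direct $\epsilon$--$\delta$ estimate: the paper approximates $f(x)$ by the finite-stage points $\theta_p(x)=[m_1\otimes\cdots\otimes m_p\otimes\overline{x_p}]$, transports the distance $d_G(\theta_p(x),\theta_p(y))$ back to $M_i^p\otimes X$ through the isometric embeddings $C_p$ and $M_i^p\otimes\mu$, and controls it by the continuity of the finite iterate $g_p$ at the cost of three $\epsilon/4$ corrections. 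Your contraction estimate and your observation that $\psi$ is an isometric isomorphism (Lambek plus the fact that a short bijection with short inverse is an isometry) are both correct, and the $\tfrac12$-scaling in Lemma \ref{LemmaMetricOnTensor} does give $d_\infty(\Phi(h_1),\Phi(h_2))\le\tfrac12 d_\infty(h_1,h_2)$.

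However, there is a genuine gap in how you set up the function space. The operator $\Phi$ is \emph{not defined} on the space of all maps $X\to S$: the formula $(M_i\otimes h)(m\otimes x)=m\otimes h(x)$ descends to the quotient $M_i\otimes X$ only when $h$ preserves the distinguished elements (e.g.\ in the bi-pointed case $l\otimes\top_X=r\otimes\bot_X$ forces $l\otimes h(\top_X)=r\otimes h(\bot_X)$, i.e.\ $h(\top_X)=\top_S$ and $h(\bot_X)=\bot_S$), and Lemma \ref{presFunctions}(\ref{cnt}), which you invoke for continuity of $M_i\otimes h$, is likewise a statement about morphisms of pointed spaces. In particular the constant maps, which you use to certify nonemptiness, are not in the domain of $\Phi$ at all. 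Once you correctly restrict to the complete space $\mathcal{F}$ of point-preserving functions $X\to S$ (nonempty, since $f\in\mathcal{F}$), the Banach argument identifies $f$ as the unique fixed point, but to conclude continuity you must exhibit at least one \emph{continuous} point-preserving seed $h_0:X\to S$, so that $f=\lim_n\Phi^n(h_0)$ is a uniform limit of continuous maps; equivalently, you must show the closed $\Phi$-invariant subspace $C(X,S)\cap\mathcal{F}$ is nonempty. This is easy in the bi-pointed case ($h_0(x)=d_X(x,\bot_X)\in[0,1]=S$), but in the tri-pointed case it requires an actual construction --- e.g.\ composing a continuous map $X\to[0,2]$ built from the distance functions to the distinguished points with a parametrization of an arc in $S$ passing through the three corners --- and your proof neither supplies it nor acknowledges that it is needed. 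With that nonemptiness supplied, your argument closes and is arguably cleaner than the paper's; without it, the contraction machinery never produces a continuous function to compare $f$ against.
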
 
\begin{proof}
Let $x \in X$ and $\,\epsilon >0\,$ be arbitrary. From the definition of $f$ we have $\displaystyle d_{S}(f(x),f(y)) = \lim_{p\rightarrow \infty}d_{G}\{[m_{1}\otimes \cdots m_{p}\otimes \overline{x_{p}}],[n_{1}\otimes \cdots n_{p}\otimes \overline{y_{p}}]\}$. Therefore, there is some  $N\in \mathbb{N}$ such that 
$$d_{S}(f(x),f(y))	-\frac{\epsilon}{4} < d_{G}\{[m_{1}\otimes \cdots m_{p}\otimes \overline{x_{p}}],[n_{1}\otimes \cdots n_{p}\otimes \overline{y_{p}}]\}$$ 
for all $p>N$ and for all $y\in X$. Choose  $p=\max\left\{N+1, \displaystyle\frac{\ln\frac{4}{\epsilon}}{\ln 2}\right\} $.  Denote $\,g_{p}=M_i^{p-1}\otimes e\circ \cdots \circ M_i\otimes e\circ e$. Because $g_p$ is continuous at $x$, $\exists \delta_{p} >0$ such that 
 $~d_{M_i^{p}\otimes I }(g_{p}(x),g_{p}(y))<\frac{\epsilon}{4}$ whenever $d(x,y)<\delta_{p}$. Suppose that $d(x,y)<\delta_p$. Then we have

 \begin{eqnarray*}
d_{S}(f(x),f(y))	
&<&\frac{\epsilon}{4} + d_{G}\{[m_{1}\otimes \cdots m_{p}\otimes \overline{x_{p}}],[n_{1}\otimes \cdots n_{p}\otimes \overline{y_{p}}]\} \\
&=&\frac{\epsilon}{4}+d_{M_i^{p}\otimes I}\{m_{1}\otimes \cdots m_{p}\otimes \overline{x_{p}},n_{1}\otimes \cdots n_{p}\otimes \overline{y_{p}}\}\\ &~&~~~~~~~~~~~\lceil~\because \, M_{i}^{p}\otimes I\xrightarrow{~C_p~}G~ \textrm{isometric embedding}~\rfloor\\
&=&\frac{\epsilon}{4}+d_{M_i^{p}\otimes X}\{m_{1}\otimes \cdots m_{p}\otimes \overline{x_{p}},n_{1}\otimes \cdots n_{p}\otimes \overline{y_{p}}\}\\ &~&~~~~~~~~~~~\lceil~\because \, M_{i}^{p}\otimes I\xrightarrow{~M_{i}^{p}\otimes \mu~} M_{i}^{p}\otimes X~\textrm{isometric embedding}~\rfloor\\
 &\leq&\frac{\epsilon}{4}+ d_{M_i^{p}\otimes X}\{m_{1}\otimes \cdots m_{p}\otimes \overline{x_{p}},m_{1}\otimes \cdots m_{p}\otimes x_{p}\}+\\
&~&\,\,\,\,\,d_{M_i^{p}\otimes X}\{m_{1}\otimes \cdots m_{p}\otimes x_{p},n_{1}\otimes \cdots n_{p}\otimes y_{p}\}+\\
&~&\,\,\,\,\,d_{M_i^{p}\otimes X}\{n_{1}\otimes \cdots n_{p}\otimes y_{p},n_{1}\otimes \cdots n_{p}\otimes  \overline{y_{p}}\}\\
&\leq& \frac{\epsilon}{4}+\frac{1}{2^{p}}+d_{M^{p}\otimes X}\{g_{p}(x),g_{p}(y)\}+\frac{1}{2^{p}}\\ 
&<& \frac{\epsilon}{4}+\frac{\epsilon}{4}+\frac{\epsilon}{4}+\frac{\epsilon}{4}=\epsilon
\end{eqnarray*}
completing the proof.	 
\end{proof}

The mediating morphism is uniquely determined for a given co-algebra $\,(X,e)\,$ and hence it is unique. Therefore,  $~(S,\psi)~$ is the final co-algebra of $~F_{i}~$ on  $~\mathbf{MS_{i}}^{C}~$.

\begin{proposition}
The final co-algebra of $~F_{i}~$ on  $~\mathbf{MS_{i}}^{C}~$ is the same as that of $F_i$ on $~\mathbf{MS_{i}}^{S}~$. 
\end{proposition}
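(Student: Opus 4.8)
The plan is to assemble the final statement from the pieces already proved in this section. The carrier $S$ is by construction the Cauchy completion of the colimit $G$ of the initial chain, and the structure map $\psi:S\to M_i\otimes S$ is the extension of the map $g$ on $G$; these were shown in \cite{Bhat} to be the final co-algebra of $F_i$ on $\mathbf{MS_i^S}$. So the content here is merely that the \emph{same} object, viewed now as an object of $\mathbf{MS_i^C}$, remains final when we enlarge the morphism class from short maps to continuous maps. First I would note that $(S,\psi)$ is indeed an object of $\mathbf{MS_i^C}$ (it is a pointed metric space, and $\psi$ is short, hence continuous), so it is a legitimate candidate. Then I would invoke the two lemmas just established: given any $F_i$-co-algebra $(X,e)$ in $\mathbf{MS_i^C}$, forgetting the metric gives a co-algebra in $\mathbf{Set_i}$, for which the iteration construction produces a well-defined function $f:X\to S$ with $(M_i\otimes f)\circ e=\psi\circ f$ (the Proposition on the mediating morphism), and the last Lemma shows this $f$ is continuous, i.e.\ is a morphism in $\mathbf{MS_i^C}$.

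Next I would address uniqueness. Any morphism $h:X\to S$ in $\mathbf{MS_i^C}$ making the co-algebra square commute is in particular a morphism of the underlying sets making the corresponding square in $\mathbf{Set_i}$ commute; since $(S,\psi)$ is (the underlying set of) the final co-algebra of $F_i$ on $\mathbf{Set_i}$ — again a fact from \cite{Bhat} recalled in the excerpt — the set-level mediating morphism is unique, so $h$ must coincide with $f$. Hence $f$ is the unique $\mathbf{MS_i^C}$-morphism $X\to(S,\psi)$, which is exactly the universal property of a final co-algebra. I would then conclude that $(S,\psi)$ is the final co-algebra of $F_i$ on $\mathbf{MS_i^C}$, and since it is literally the object produced as the final co-algebra on $\mathbf{MS_i^S}$, the two final co-algebras agree (up to the canonical identification).

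A cleaner way to phrase the whole argument, which I might prefer to present, is to use the inclusion functor $\iota:\mathbf{MS_i^S}\hookrightarrow\mathbf{MS_i^C}$ (which commutes with $F_i$ by Lemma~\ref{presFunctions}): existence of the mediating morphism and its uniqueness together say precisely that $\iota(S,\psi)$ satisfies the final-co-algebra universal property in the larger category. The point worth emphasizing in the writeup is that the two things that make this work — that the candidate mediating map lands in $S$ at the set level, and that its uniqueness is already forced at the set level — are exactly what was supplied by the Proposition and the preceding discussion, while the only genuinely new metric input is the continuity Lemma.

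The step I expect to carry the real weight is not in this short proposition but in the continuity Lemma it depends on; given that lemma, the Proposition is essentially a bookkeeping argument, and the only subtlety to be careful about is making sure the set-level uniqueness is genuinely available (i.e.\ that $\mathbf{Set_i}$-finality of $(S,\psi)$ after forgetting the metric is correctly cited from \cite{Bhat}), since that is what upgrades "there exists a mediating $\mathbf{MS_i^C}$-morphism" to "there exists a \emph{unique} one."
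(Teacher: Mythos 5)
Your proposal is correct and follows essentially the same route as the paper: existence of the mediating morphism comes from the limit construction and the continuity lemma, and uniqueness is inherited from the set-level finality of $(S,\psi)$ established in \cite{Bhat}, which is exactly the paper's (very brief) closing remark before the proposition. Your writeup is in fact more explicit about why set-level uniqueness suffices, but it is the same argument.
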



\section{The initial algebra of $~F_{i}~$ on $~MS_{i}^{C}~$ is not that of  $~F_{i}~$ on $~MS_{i}^{S}~$} 
\label{secOnInitialAlgebraMSC}

  As the final co-algebra of $~F_{i}~$ on  $~\mathbf{MS_{i}}^{C}~$ is the same as the final co-algebra of $~F_{i}~$ on  $~\mathbf{MS_{i}}^{S}~$, one may wonder whether a similar result holds for the initial algebra of $~F_{i}~$ on $~\mathbf{MS_{i}}^{S}~$ and  $~\mathbf{MS_{i}}^{C}~$ . We  give a negative answer to this question by giving a counter example (Example \ref{ExampleInitialAlgeMSC}). First we need some preliminary results. The initial algebra of $~F_{i}~$  on  $~\mathbf{MS_{i}}^{S}~$  is the same as the initial algebra of $~F_{i}~$  on  $~\mathbf{Set_{i}}~$ (see\cite{Bhat}). Lemma \ref{mediationMorAlgebra} given below states a way to compute the mediating morphism at the set level for both the tri-pointed and bi-pointed cases. We will use this later to decide whether the mediating morphism is continuous or Lipschitz. Again, we demonstrate it only for the tri-pointed case, as the bi-pointed case is similar.   
  
Let $~(X,e)~$ be an algebra for $~F_{3}~$ on $\mathbf{Set_3}$, where $\,e\,$ is a set function. Let $~(G,g)~$ be the initial algebra of $~F_{3}~$ in this setting (See \cite{Bhat}). For $~x=[(m_{1}\otimes \cdots m_{k}\otimes d)]\in G~$; where $~d\in \{T,L,R\}$, $~\overline{x}=m_{1}\otimes \cdots m_{k}\otimes d_{X}~$. $~d_{X}~$ is chosen corresponding to  $~d~$. For instance if $~d=T~$, then $~d_{X}=T_{X}~$. Now consider the chain $~M_3^{k}\otimes X\xrightarrow{M_3^{k-1}\otimes e}M_3^{k-1}\otimes X \cdots M_3\otimes X\xrightarrow{e} X~$. Take $~g_{k}=e \circ M_3\otimes e \circ \cdots M_3^{k-1}\otimes e~$,  and define $~f:G \longrightarrow X ~$ by $~f(x)=g_{k}(\overline{x})~$.
 \begin{lemma}
 \label{mediationMorAlgebra}
The mediating morphism for a given algebra $\,(X,e)\,$ for $F_i$ on $\mathbf{Set_i}$, $i=2$ or $3$, is given by $f(x)=g_{k}(\overline{x})~$, where $g_{k}=e \circ M_i\otimes e \circ \cdots M_i^{k-1}\otimes e$. 
\end{lemma}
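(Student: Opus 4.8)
The plan is to prove the lemma in three moves: (1) check that $f(x)=g_k(\overline{x})$ is independent of the chosen representative of $x\in G$ and of its length $k$; (2) check that $f$ is an $F_i$-algebra homomorphism $(G,g)\to(X,e)$; and (3) invoke the initiality of $(G,g)$ to conclude that this $f$ is the unique mediating morphism. It is convenient to write $\mu\colon I\to X$ for the unique morphism out of the initial object and to observe that $\overline{x}=(M_i^{k}\otimes\mu)(x_k)$ for any length-$k$ representative $x_k\in M_i^{k}\otimes I$ of $x$, since $\mu$ carries the distinguished points of $I$ to the correspondingly named distinguished points of $X$ (that is, $d_X=\mu(d)$). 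Because $M_i^{k}\otimes(-)=F_i^{k}$ is a functor on $\mathbf{Set_i}$ (Lemma \ref{presFunctions}) and $g_k$ is a genuine function on the quotient $M_i^{k}\otimes X$ (it is a composite of the maps $M_i^{j}\otimes e$), the value $g_k(\overline{x})$ automatically respects the tensor relations inside each $M_i^{k}\otimes I$; so the only remaining point in (1) is invariance under the connecting maps $M_i^{k}\otimes !$ of the initial chain.

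For that, I would use the two identities $g_{k+1}=g_k\circ(M_i^{k}\otimes e)$, immediate from the definition of $g_k$, and $\mu=e\circ(M_i\otimes\mu)\circ !$, which holds because both sides are maps $I\to X$ and $I$ is initial. Applying the functor $M_i^{k}\otimes(-)$ to the latter (using $M_i^{k+1}\otimes\mu=M_i^{k}\otimes(M_i\otimes\mu)$) and composing with $g_k$ on the left yields
\[
g_{k+1}\circ(M_i^{k+1}\otimes\mu)\circ(M_i^{k}\otimes !)=g_k\circ\bigl(M_i^{k}\otimes(e\circ(M_i\otimes\mu)\circ !)\bigr)=g_k\circ(M_i^{k}\otimes\mu),
\]
which says exactly that $f$ takes the same value on $x_k$ and on $(M_i^{k}\otimes !)(x_k)$; equivalently, $\bigl(g_n\circ(M_i^{n}\otimes\mu)\bigr)_n$ is a compatible cocone under the initial chain and $f$ is the map it induces out of the colimit $G$. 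Intuitively this is just the remark that, since $e$ preserves the distinguished points, $M_i^{k}\otimes e$ erases the canonical letter that $M_i^{k}\otimes !$ appends.

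For (2), take $m\otimes x\in M_i\otimes G$ with $x$ represented by $x_k\in M_i^{k}\otimes I$; then $g(m\otimes x)=[\,m\otimes x_k\,]$ with $m\otimes x_k\in M_i^{k+1}\otimes I$, and using $g_{k+1}=e\circ(M_i\otimes g_k)$,
\begin{align*}
f\bigl(g(m\otimes x)\bigr)&=g_{k+1}\bigl(m\otimes(M_i^{k}\otimes\mu)(x_k)\bigr)=e\bigl(m\otimes g_k((M_i^{k}\otimes\mu)(x_k))\bigr)\\
&=e\bigl(m\otimes f(x)\bigr)=e\bigl((M_i\otimes f)(m\otimes x)\bigr),
\end{align*}
so $f\circ g=e\circ(M_i\otimes f)=e\circ F_if$. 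Thus $f$ is an algebra homomorphism out of the initial algebra, and by initiality it is the only such homomorphism --- hence the mediating morphism. The bi-pointed case ($i=2$) is identical, with $\{T,L,R\}$ and its three gluing relations replaced by $\{\top,\bot\}$ and the single relation $(l,\top)\sim(r,\bot)$.

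I expect step (1) to be the only delicate point: $G$ is a double quotient of $\bigcup_n M_i^{n}\otimes I$ --- by the tensor relations inside each term and by the connecting maps across terms --- so one must be sure that $\overline{x}$, and hence $g_k(\overline{x})$, is blind to both. Writing $\overline{x}=(M_i^{k}\otimes\mu)(x_k)$ and leaning on the functoriality of $F_i$ from Lemma \ref{presFunctions} is what reduces this to the single chain-compatibility identity displayed above; the algebra law in step (2) and the appeal to initiality in step (3) are then routine.
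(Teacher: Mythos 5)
Your proposal is correct and follows essentially the same route as the paper: the well-definedness step rests on the identity $\mu=e\circ(M_i\otimes\mu)\circ\,!$ coming from initiality of $I$, propagated by applying $M_i^{k}\otimes(-)$ (this is exactly the paper's commuting square with $\eta$ in place of $\mu$), and the homomorphism square is then checked by the same direct computation. Your cocone phrasing and the explicit observation $\overline{x}=(M_i^{k}\otimes\mu)(x_k)$ are a slightly cleaner packaging, but the substance is identical.
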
 
\begin{proof}
Let us first prove that $~f~$ is well defined. Let $~[(p_{1}\otimes\cdots  p_{r}\otimes d_{r})]=[(q_{1}\otimes\cdots  q_{s}\otimes d_{s})]$. Thus we have $~p_{1}\otimes\cdots  p_{r}\otimes d_{r}\sim q_{1}\otimes\cdots  q_{s}\otimes d_{s}~ $. Without loss of generality take  $~s>r~$ and consider the following initial chain, where $0$ is the initial object. 
$$0\xrightarrow{!}M_i\otimes 0\xrightarrow{M_i\otimes !}\cdots M_i^{r}\otimes 0\xrightarrow{M_i^{r}\otimes !}M_i^{r+1}\otimes 0\cdots \xrightarrow{M_i^{s-1}\otimes !}M_i^{s}\otimes 0$$
Thus $\left( M_i^{s-1}\otimes !\circ M_i^{s-2}\otimes !\cdots M_i^{r+1}\otimes !\circ M_i^{r}\otimes ! \right) \left( p_{1}\otimes\cdots  p_{r}\otimes d_{r} \right)=  q_{1}\cdots  \otimes d_{s}$\\
Since $~0~$ is the initial object, the following diagram commutes.  
$$\begin{array}{ccc}
M_i\otimes X & \underrightarrow{~~~~ e ~~~~} &  X\\\\
\uparrow{M_i\otimes  \eta} &  &\uparrow{\eta}  \\\\
M_i\otimes 0 & \overleftarrow{~~~~~ !~~~~~} & 0 \\
\end{array}$$
and hence by applying $M_i\otimes -$ repeatedly we have the following commuting square.  
$$\begin{array}{ccc}
M_i^{s}\otimes X & \underrightarrow{~ M_i^{r}\otimes e \circ M_i^{r+1}\otimes e \circ \cdots \circ M_i^{s-2}\otimes e \circ M_i^{s-1}\otimes e~} &  M_i^{r}\otimes X\\\\
\uparrow{M_i^{s}\otimes  \eta} &  &\uparrow{M_i^{r}\otimes  \eta}   \\\\
M_i^{s}\otimes 0 & \overleftarrow{~~~~~~M_i^{s-1}\otimes !\circ M_i^{s-2}\otimes !\cdots M_i^{r+1}\otimes !\circ M_i^{r}\otimes ! ~~~~~~} & M_i^{r}\otimes 0 \\
\end{array}$$

Thus $$~M_i^{r}\otimes e \circ M_i^{r+1}\otimes e \circ \cdots \circ M_i^{s-2}\otimes e \circ M_i^{s-1}\otimes e(q_{1}\otimes\cdots  q_{s}\otimes d_{s_{X}})=p_{1}\otimes\cdots  p_{r}\otimes d_{r_{X}}$$
Now consider $~f[(q_{1}\otimes\cdots  q_{s}\otimes d_{s})]= e\circ M_i\otimes e \cdots\circ M_i^{s-1}\otimes e (q_{1}\otimes\cdots  q_{s}\otimes d_{s_{X}})$. The right side of this equation is equal to  $$  e\circ M_i\otimes \cdots \otimes e\circ M_i^{r-1}\otimes e (M_i^{r}\otimes e  \circ \cdots  \circ M_i^{s-1}\otimes e(q_{1}\otimes\cdots  q_{s}\otimes d_{s_{X}})) $$ which is equal to $\,
 e\circ M_i\otimes e\circ \cdots \circ M_i^{r-1}\otimes e(p_{1}\otimes\cdots  p_{r}\otimes d_{r_{X}} )=f[(p_{1}\otimes\cdots  p_{r}\otimes d_{r})]
$. Thus $f[(q_{1}\otimes\cdots  q_{r}\otimes d_{s})]=f[(p_{1}\otimes\cdots  p_{r}\otimes d_{r})]
$
 and therefore $~f~$ is well defined.\\
 
We are left to show that the following  diagram commutes. 

$$\begin{array}{ccc}
M_i\otimes G & \underrightarrow{~~~~ g ~~~~} &  G\\\\
\downarrow{M_i\otimes f} &  & \downarrow{  f}\\\\
M_i\otimes X & \overrightarrow{~~~~ e~~~~} & X \\
\end{array}$$

For any  $~m_{0}\otimes [(m_{1}\otimes m_{2}\cdots\otimes m_{k}\otimes x_{k})]\in M_i\otimes G~$ and $~e(m_{k}\otimes x_{k})=x_{k-1}$; where $k=1,2,\cdots$, Consider the following equality.
\[
f[(m_{0}\otimes m_{1}\cdots  \otimes m_{k}\otimes x_{k})]= e \circ M_i\otimes e \cdots M_i^{k}\otimes e (m_{0}\otimes m_{1}\cdots  \otimes m_{k}\otimes x_{k_{X}})
\]

Applying $~M_i^{k}\otimes e~$ to the element $m_{0}\otimes m_{1}\cdots  \otimes m_{k}\otimes x_{k_{X}}$,  the right side becomes $~ e\circ  \cdots\circ M_i^{k-1}\otimes e (m_{0}\otimes m_{1}\cdots  \otimes m_{k-1}\otimes x_{k-1})$. Continuing  this process , eventually we get    $f[(m_{0}\otimes m_{1}\cdots  \otimes m_{k}\otimes x_{k})]=  e(m_{0}\otimes x_{0}) 
$. Similarly $~f[( m_{1}\cdots  \otimes m_{k}\otimes x_{k})]=x_{0}$. Thus we have $e\circ M_i\otimes f \{m_{0}\otimes[( m_{1}\cdots  \otimes m_{k}\otimes x_{k})] \}=f\circ h\{m_{0}\otimes[( m_{1}\cdots  \otimes m_{k}\otimes x_{k})] \}=e(m_{0}\otimes x_{0})$ and hence 
$f\circ h=e \circ M_i\otimes f$
\end{proof}

\begin{proposition}[See \cite{Bhat}]
 Let $~X_{0}=\{\bot,\top\}~$ be a bi-pointed set . Then there are isometries   $c_{0},c_{1},c_{2}\cdots,c_{n},\cdots$ such that $\,M_2^{n}\otimes X_{0}\cong D_{n}\,;~\forall\,n\in \mathbf{Z^{+}}\cup\{0\}\,$. Here $\,D_{q}=\{\displaystyle\frac{p}{2^{q}}~/~0\leq p \leq 2^{q}~\&\,p,q\in \mathbf{Z^{+}}\cup\{0\}\}\,$ and $c_{0}:X_0\longrightarrow D_0=\{0,1\}$ is given by $c_{0}(\bot)=0\,,\,c_{0}(\top)=1$. Moreover, $c_k:M_{2}^{k}\otimes X_{0}\longrightarrow D_{k} $ is given inductively by

 $$c_{k}(m_{1}\otimes \cdots m_{k}\otimes d)=\left\{%
	\begin{array}{ll}
	\displaystyle\frac{1}{2}c_{k}(m_{2}\otimes \cdots m_{k}\otimes d), & \hbox{$m_1=l$;} \\
	\displaystyle\frac{1}{2}(c_{k}(m_{2}\otimes \cdots m_{k}\otimes d)+1), & \hbox{$m_{1}=r$.}
	\end{array}%
	\right.$$
  \end{proposition}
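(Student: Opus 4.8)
The plan is to prove this by induction on $n$, simultaneously constructing the isometries $c_n$ and verifying the stated recurrence. The key structural observation is that $M_2 \otimes X_0 = M_2 \otimes \{\bot,\top\}$ is, by the gluing relation $(l,\top)\sim(r,\bot)$, a three-element bi-pointed set $\{l\otimes\bot,\ l\otimes\top = r\otimes\bot,\ r\otimes\top\}$, which under the metric of Lemma~\ref{LemmaMetricOnTensor} has pairwise distances $\tfrac12, \tfrac12, 1$; this matches $D_1 = \{0,\tfrac12,1\}$ exactly, giving the base step (after noting $c_0$ is trivially an isometry of the two-point space onto $D_0$). For the inductive step I would assume $c_k : M_2^k \otimes X_0 \to D_k$ is an isometry and define $c_{k+1}$ on $M_2^{k+1}\otimes X_0 = M_2 \otimes (M_2^k \otimes X_0)$ by the displayed formula, i.e. by splitting on whether the leading letter $m_1$ is $l$ or $r$ and applying the affine contraction $t \mapsto t/2$ or $t \mapsto (t+1)/2$ to $c_k$ of the tail.

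First I would check that $c_{k+1}$ is well defined on the quotient $M_2 \otimes (M_2^k\otimes X_0)$: the only identification is $l \otimes \top_{M_2^k\otimes X_0} \sim r \otimes \bot_{M_2^k\otimes X_0}$, and since $c_k$ is a bi-pointed map sending $\bot$ to $0$ and $\top$ to $1$, the $l$-branch sends the first to $\tfrac12 \cdot 1 = \tfrac12$ and the $r$-branch sends the second to $\tfrac12(0+1) = \tfrac12$, so the two agree. Next I would check surjectivity onto $D_{k+1}$: every dyadic rational $p/2^{k+1}$ with $0 \le p \le 2^{k+1}$ is either $\le \tfrac12$, hence of the form $\tfrac12 q/2^k$ with $q/2^k \in D_k$ (use the $l$-branch), or $\ge \tfrac12$, hence of the form $\tfrac12(q/2^k + 1)$ (use the $r$-branch); since $c_k$ is onto $D_k$ by induction, $c_{k+1}$ is onto $D_k/2 \cup (D_k/2 + \tfrac12) = D_{k+1}$.

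The main work is the distance-preservation check. Take two elements $m_1 \otimes u$ and $n_1 \otimes v$ of $M_2^{k+1}\otimes X_0$ with $u, v \in M_2^k \otimes X_0$. If $m_1 = n_1$, then $d(m_1\otimes u, m_1\otimes v) = \tfrac12 d(u,v) = \tfrac12 |c_k(u) - c_k(v)|$ by the inductive hypothesis, while $|c_{k+1}(m_1\otimes u) - c_{k+1}(m_1\otimes v)| = \tfrac12|c_k(u) - c_k(v)|$ since both get the same affine map applied; these agree. If $m_1 \ne n_1$, say $m_1 = l$, $n_1 = r$, then Lemma~\ref{LemmaMetricOnTensor} gives $d(l\otimes u, r\otimes v) = \tfrac12 d(u,\top) + \tfrac12 d(\bot, v) = \tfrac12(1 - c_k(u)) + \tfrac12 c_k(v)$, using that $c_k$ is an isometry and that $d(u,\top) = |c_k(u) - 1| = 1 - c_k(u)$ and $d(\bot,v) = c_k(v)$; on the other side $|c_{k+1}(l\otimes u) - c_{k+1}(r\otimes v)| = |\tfrac12 c_k(u) - \tfrac12(c_k(v)+1)| = \tfrac12|c_k(u) - c_k(v) - 1| = \tfrac12(1 + c_k(v) - c_k(u))$ since $c_k(u) \le 1$ and $c_k(v) \ge 0$ force the quantity inside to be $\le 0$. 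The two expressions coincide, which completes the isometry check and the induction.

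I expect the main obstacle to be bookkeeping the sign/absolute-value manipulations in the $m_1 \ne n_1$ case and being careful that the minimum in the tri-pointed-style formula collapses correctly in the bi-pointed setting (here it is already collapsed in Lemma~\ref{LemmaMetricOnTensor}, so the only subtlety is justifying $d(u,\top) = 1 - c_k(u)$, i.e. that $c_k(u) \in [0,1]$ always, which follows from $c_{k}$ mapping into $D_k \subseteq [0,1]$). Injectivity of each $c_k$ then follows for free from distance preservation, so "isometry" in the full sense (bijective distance-preserving map) is established. A remark worth including is that these $c_k$ are compatible with the bonding maps $M_2 \otimes (-)$ applied to the unique map $! : X_0 \to M_2 \otimes X_0$, i.e. they assemble into the identification of the colimit $G$ with the dyadic rationals $\mathbb{Q} \cap [0,1]$, whose completion is $[0,1]$ — but that compatibility, while conceptually central, is not strictly needed for the statement as phrased and can be deferred.
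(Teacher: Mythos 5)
Your proof is correct. The paper itself offers no argument for this proposition --- it is stated with a pointer to the cited reference of Bhattacharya, Moss, Ratnayake and Rose --- so there is no in-paper proof to compare against; your induction (well-definedness on the quotient via the single identification $l\otimes\top\sim r\otimes\bot$, surjectivity via $D_{k+1}=\tfrac12 D_k\cup(\tfrac12 D_k+\tfrac12)$, and the two-case distance check using Lemma~\ref{LemmaMetricOnTensor}) is exactly the argument one would expect, and each step checks out. One small point of hygiene: the inductive hypothesis should explicitly include that $c_k$ preserves the distinguished points ($c_k(\bot)=0$, $c_k(\top)=1$), since you use this three separate times --- in the well-definedness check, in rewriting $d(u,\top)=1-c_k(u)$ and $d(\bot,v)=c_k(v)$, and to restart the induction; the verification for $c_{k+1}$ is the one-line computation $c_{k+1}(l\otimes\bot)=\tfrac12\cdot 0=0$ and $c_{k+1}(r\otimes\top)=\tfrac12(1+1)=1$. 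Note also that the displayed recursion in the statement has an index typo (the $c_k$ on the right-hand side should be $c_{k-1}$, since the tail $m_2\otimes\cdots\otimes m_k\otimes d$ lives in $M_2^{k-1}\otimes X_0$); you have read it in the intended way.
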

  
The following example states that $~(D, \phi )~$ is not an initial algebra of $~F_{2}~$ on  $~\mathbf{MS_{2}}^{C}~$ and $~\mathbf{MS_{2}}^{L}$. We will show it for the continuous case. The same example will works for the Lipschitz case too.    

\begin{example}
\label{ExampleInitialAlgeMSC}
\emph{
Let $~X_{0}=\{\bot,\top\}~$ be a bi-pointed set .   Consider the function $~e:M_2\otimes X_{0} \longrightarrow  X_{0}~$ given by $~e(l\otimes 0)=0,e(l\otimes 1)=0~ $ and $~e(r\otimes 1)=1~$. Clearly this $~e~$ is Lipschitz as it is a function from a finite metric space and it is also continuous. The initial algebra of $~F_{2}~$ on $~\mathbf{MS_{2}}^{S}~$ is  the pair $~(D, \phi)~$: where  $~D~$ is the dyadic rationals in the unit interval and $~\phi:M_2\otimes D \longrightarrow D~$ is as follows. }
\end{example}
$$\phi(m\otimes x)=\left\{%
	\begin{array}{ll}
	\displaystyle\frac{x}{2}, & \hbox{$m=l$;} \\\\
	\displaystyle\frac{x+1}{2}, & \hbox{$m=r$.}
	\end{array}%
	\right.$$
Suppose that $~(\phi, D)~$ is the initial algebra of $~F_{2}~$ on $~\mathbf{MS_{2}}^{C}$. Then there is a unique continuous function $\,f\,$ such that the expected diagram commutes.  
Suppose that $\,f\,$ is continuous. Then $\,\exists\, \delta>0\,$ such that $~d(f(x),f(y))<\displaystyle\frac{1}{2}$   whenever $~\forall \, d(x,y)<\delta$. Choose $\,n\,$ large enough so that $~\displaystyle\frac{1}{2^{n}}<\delta~$. Thus $~d(1,\displaystyle\frac{2^{n}-1}{2^{n}})<\delta~$ and $~d\left(f(1),f(\displaystyle\frac{2^{n}-1}{2^{n}})\right)<\displaystyle\frac{1}{2}~$. Now $\,f(1)=1\,$ as  $\,f\,$ preserves the distinguished elements. Now $\,\frac{2^{n}-1}{2^{n}}\in D_{n}\,$ and from the straightforward computation  we have $\,c_{n}(r\otimes r \otimes\cdots \otimes r\otimes 0)=\frac{2^{n}-1}{2^{n}}\,$. Thus the element $\,\frac{2^{n}-1}{2^{n}}\,$ is identified with $\,r\otimes r \otimes\cdots \otimes r\otimes 0\,$.  Thus  $\,f(\displaystyle\frac{2^{n}-1}{2^{n}})\,$ can be evaluated as follows.
\begin{eqnarray*}
f(\displaystyle\frac{2^{n}-1}{2^{n}})&=&e_{2}\circ M_{2}\otimes e_{2} \circ \cdots M_{2}^{n-2}\otimes e_{2}\circ M_{2}^{n-1}\otimes e_{2}((r\otimes r \otimes\cdots \otimes r\otimes 0)\\
&=&e_{2}\circ M_{2}\otimes e_{2} \circ \cdots M_{2}^{n-2}\otimes e_{2}(( r \otimes\cdots \otimes r\otimes 0)\\
&=&\colon~~~\colon~~~~\colon  \\
&=&e_{2}(r\otimes 0)=0\\
\end{eqnarray*}
Therefore we have $d\left(f(1),f(\displaystyle\frac{2^{n}-1}{2^{n}})\right)=d(1,0)=1<\displaystyle\frac{1}{2}~$
which is obviously a contradiction.  Thus the  mediating morphism $~f~$ is  not continuous.  Hence $~(\phi,D)~$ is not the initial algebra for $~F_{2}~$ on $\mathbf{MS_{2}}^{C}$.

\begin{example}
\label{ExampleInitialAlgeMSCtri}
\emph{
Consider the tri-pointed set $~Y_{0}=\{T,L,R\}~$ and the function $~e_3:M_3\otimes Y_{0} \longrightarrow  Y_{0}~$ given by $~e_3(a\otimes T)=T\,,\,e_3(c\otimes R)=R~ $ and $~e_3(a\otimes L)=e_{3}(a\otimes R)=e_{3}(b\otimes L)=e_{3}(b\otimes R)=L~$. Because $~e_3~$ is a function from a finite metric space, it is Lipschitz  and hence  continuous too. Consider the initial algebra $~(G, g)~$ of $~F_{3}~$ on $~\mathbf{MS_{3}}^{S}~$, where  $~g:M_3\otimes G \longrightarrow G~$ is given by $g(m\otimes [x])=[m\otimes x]$. See the beginning of Section 3}
\end{example}

Leaving out the metric structure, $(G,g)$ is also the  initial algebra of $F_3$ on $\textbf{Set}_3$ and $( Y_0,e_{3})$ is a $F_3$ algebra on $\textbf{Set}_3$.Thus there exists a unique $f:G\longrightarrow Y_0$ such that following diagram commutes. 

$$\begin{array}{ccc}
M_3\otimes G & \underrightarrow{~~~~ g ~~~~} &  G\\
\downarrow{M_3\otimes f} &  & \downarrow{  f}\\
M_3\otimes Y_0 & \overrightarrow{~~~~ e_3~~~~} & Y_0 \\
\end{array}$$

The map $\,f\,$ was found explicitly in Lemma \ref{mediationMorAlgebra}. Suppose that $\,f\,$ is continuous at $[T]\in G$. Then $\,\exists\, \delta>0\,$ such that $~d(f([T]),f(y))<\displaystyle\frac{1}{2}$ , whenever $d(T,y)<\delta$. Choose $n$ large enough so that $~\displaystyle\frac{1}{2^{n}}<\delta~$. Note that $[T]=[a\otimes T]=[a\otimes a\otimes T]=\cdots=[a\otimes a\otimes \cdots \otimes a \otimes T]=\cdots,$  and $f([T])=f ([a\otimes a\otimes \cdots \otimes a \otimes T])=T_{Y_0}$ as the distinguished elements are preserved by $f$. Let $y=[a\otimes a\otimes \cdots \otimes a \otimes L]$, where $a$ occurs $(n+1)$ times. Then $d([T],y)=\frac{1}{2^{n+1}}<\frac{1}{2^{n}}<\delta$. However,  
\begin{eqnarray*}
f(y_0)&=&f ([a\otimes a\otimes \cdots \otimes a \otimes L])\\
&=&e_{3}\circ M_{3}\otimes e_{3}\circ \cdots M_{3}^{n-1}\otimes e_{3}\circ M_{3}^{n}\otimes e_{3}(a\otimes a\otimes \cdots \otimes a \otimes L)\\
&=&e_{3}\circ M_{3}\otimes e_{3}\circ \cdots M_{3}^{n-1}\otimes e_{3}(a\otimes \cdots \otimes a \otimes L)\\
&=&\colon~~~\colon~~~~\colon  \\
&=&e_{3}(a\otimes L)=L_{Y_0}.\\
\end{eqnarray*}
Therefore $\,d(f(T),f(y_0))=d(T_{Y_0},L_{Y_0})=1<\frac{1}{2}\,$, which is  a contradiction. Thus $f$ is not continuous and hence $f$ is not Lipschitz. Hence $(G,g)$ is not the initial algebra of $F_3$ on  $~\mathbf{MS_{3}}^{C}~$ as well as on $~\mathbf{MS_{3}}^{L}~$.

\section{Final co-algebra and initial algebra  of $~F_{i}~$ on $~MS_{i}^{S}~$ are not that of  $~F_{i}~$ on $~MS_{i}^{L}~$} 
\label{secOnMSL}
    In this section we answer two questions. One is  the question raised in \cite{Bhat}, whether $~(S_{i},\psi_{i})~$, the final co-algebra of $~F_{i}~$ on $~\mathbf{MS_{i}^{S}}$ is the final co-algebra of $~F_{i}~$ on $~\mathbf{MS_{i}^{L}}.~$ In Section \ref{secOnMSC} we have shown that $~(S_{i},\psi_{i})~$  is the final co-algebra of $~F_{i}~$ on $~\mathbf{MS_{i}^{C}}~$. However, we provide a negative answer to the question by showing that $~(S_{i},\psi_{i})~$ is not the final co-algebra of $~F_{i}~$ on $~\mathbf{MS_{i}^{L}}$.  The initial algebra $~(G_{i}, g_{i})~$  of $~F_{i}~$ on $~\mathbf{MS_{i}^{S}}~$, after leaving out the metric structure, is the same as that of $F_i$ on $~\mathbf{Set_{i}}~$ (See \cite{Bhat}). One may ask a similar question, whether  $\,(G_{i},g_{i})\,$  of $\,F_{i}\,$ on $\,\mathbf{MS_{i}^{L}}\,$, after leaving out the metric structure, is the same as that of $\,F_i\,$ on $\,\mathbf{Set_{i}}\,$. We give a negative answer to this question too.\\ 
 
As a consequence of Lemma \ref{presFunctions}, we have $~F_{i}~ $ as an endofunctor on  $~\mathbf{MS_{i}^{L}}~$. Recall the  final co-algebra of $~F_{2}~$ on  $~\mathbf{MS_{2}^{S}}~$ which is  $~(I,i)~$; where $~I=[0~,~1]~$ and $i:I\longrightarrow M_{2}\otimes I$ is given by  
\[
i(x)=\left\{%
\begin{array}{ll}
l\otimes x, & \hbox{$x\in [0~\frac{1}{2}] $;} \\
r\otimes x, & \hbox{$x\in [\frac{1}{2}~1]$.}
\end{array}%
\right.
\]

\begin{example}\label{ExampleFinalCoalgebra}
\emph{
Define $e:I\longrightarrow M_{2}\otimes I$  by }

$$e(x)=\left\{%
\begin{array}{llll}
l\otimes 0, & \hbox{$x\in [0~\frac{1}{4}] $;} \\\\
l\otimes (4x-1), & \hbox{$x\in [\frac{1}{4}~\frac{1}{2}] $;}\\\\
r\otimes (4x-2), & \hbox{$x\in [\frac{1}{2}~\frac{3}{4}] $;}\\\\
r\otimes 1, & \hbox{$x\in [\frac{3}{4}~1]$.}
\end{array}%
\right.$$
\end{example}

One can easily  show that $~e~$ is  Lipschitz with Lipschitz constant 2 and thus  $~(e,I)~$ is a co-algebra in $~\mathbf{MS_{i}^{L}}\,$. Since $~(I,i)~$ is the final co-algebra in $\mathbf{Set_2}$, after forgetting the metric structure, there exists a unique set function $~f:I\longrightarrow I~$ such that the expected diagram commutes.

$$\begin{array}{ccc}
I & \xleftarrow{~~~~ i^{-1} ~~~~} &  M_{2}\otimes I\\\\
\uparrow{f} &  & \uparrow{M_{2}\otimes  f}\\\\
I & \overrightarrow{~~~~ e~~~~} & M_{2}\otimes I
\end{array}$$

By commutativity $~f~$ must satisfy the following conditions.

\[
f(x)=\left\{%
\begin{array}{ll}
\displaystyle 0  & \hbox{$x\in [0~,~\frac{1}{2}] $;} \\\\
\displaystyle\frac{f(4x-1)}{2} & \hbox{$x\in [\frac{1}{4}~,~\frac{1}{2}] $;} \\\\
\displaystyle\frac{1+f(4x-2)}{2} & \hbox{$x\in [\frac{3}{4}~,~1] $;} \\\\
\displaystyle 1  & \hbox{$x\in [\frac{3}{4}~,~1] $.} \\
\end{array}%
\right.
\]

Define the following families of intervals for $n =1, 2, 3, \cdots$. 
\begin{align*}
I_n & = \left[\displaystyle \frac{1}{4}~,~\displaystyle\frac{1}{4}+\cdots+\frac{1}{4^{n}}\right]\\ 
J_n & = \left[\displaystyle \frac{1}{4}+\cdots+\frac{1}{4^{n}}+\frac{3}{4^{n+1}}~,~\displaystyle\frac{1}{4}+\cdots+\frac{1}{4^{n}}+\frac{4}{4^{n+1}}\right]
\end{align*}

Using the conditions the mediating morphism satisfies,  we will show that $~f~$ satisfies the following properties.

\begin{enumerate}[(a)]
	\item $f(x)=0~, \,\,\, \forall~n\in\mathbb{N} \text{ and } \forall x\in I_n ~$ \label{cond1}
    \item $~f(x)=\displaystyle \frac{1}{2^{n}}~, \,\,\, \forall~n\in\mathbb{N} \text{ and } \forall x\in J_n$ \label{cond2}
\end{enumerate} 
We shall prove these properties by induction on $~n~$. First let us prove (\ref{cond1}).  For $n=2$ and $~x\in I_2$, we have $~~4x-1\in [0~,~\frac{1}{4}]~$ and $~f(4x-1)=0$. Thus  $~f(x)=\frac{f(4x-1)}{2}=0$. Suppose that $f(x)=0~,~\forall x\in I_n~$. Let $~x\in I_{n+1}$. Then $~~4x-1\in I_n~$ and $~f(4x-1)=0$. Thus $~~f(x)=\frac{f(4x-1)}{2}=0$.  Thus by induction $~f(x)=0~,$ for $ x\in I_n$.\\
 
To prove (\ref{cond2}), first consider the case $~n=1$ and let $x\in J_1$. We then have $\,4x-1\in [\dfrac{3}{4}\,,\,1] $ and $~f(4x-1)=1\,$. Thus $\,f(x)=\dfrac{f(4x-1)}{2}=\displaystyle \dfrac{1}{2}$. Now suppose that for any $x\in J_n$,  $~f(x)=\displaystyle\frac{1}{2^{n-1}}$. Let $~x\in J_{n+1}~ $. We have $~4x-1\in J_n $ and $f(4x-1)=\dfrac{1}{2^n}~$. Thus $~f(x)=\frac{f(4x-1)}{2}=\displaystyle \frac{1}{2^{n+1}}$. Thus by induction $~f(x)=\displaystyle\frac{1}{2^{n}}~,x\in J_n$.

With (\ref{cond1}) and (\ref{cond2}) being proved, to show that $~f~$ is not Lipschitz, suppose to the contrary that $f$ is Lipschitz. Then we have some $k>0$ such that $~d(e(x),e(y))\leq k d(x,y),\,\,~\forall\, x,y \in I$.  Choose $~x= \displaystyle\frac{1}{4}+\cdots+\frac{1}{4^{n}}+\frac{1}{4^{n+1}}~$ and $~y=\displaystyle\frac{1}{4}+\cdots+\frac{1}{4^{n}}+\frac{3}{4^{n+1}}$. Then $~f(x)=0~$ and $~f(y)=\displaystyle \frac{1}{2^{n}}$. From the Lipschitz condition, we have $~\displaystyle \frac{1}{2^{n}}\leq k\displaystyle \frac{2}{4^{n+1}}, ~\forall\, n\in \mathbb{N}~$; which implies 
 $~k\geq 2.2^{n}, ~\forall\, n\in \mathbb{N}$. Hence $~k~$ is not bounded, which is a contradiction. Therefore $~f~$ is not Lipschitz. 
 
 Thus we have the following proposition.
 
\begin{proposition}
$~(I,i)~$ is not the final co-algebra of $~F_{2}~$ on $~\mathbf{MS_{2}^{L}}~$. 
\end{proposition}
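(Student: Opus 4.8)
The plan is to exhibit a single co-algebra in $\mathbf{MS_{2}^{L}}$ for which no Lipschitz mediating morphism to $(I,i)$ can exist. Take $e$ as in Example~\ref{ExampleFinalCoalgebra}; a routine check of its four affine branches and the two interior junction points shows $e$ is Lipschitz with constant $2$, so $(I,e)$ is a co-algebra for $F_{2}$ on $\mathbf{MS_{2}^{L}}$. Suppose now, toward a contradiction, that $(I,i)$ is the final co-algebra of $F_{2}$ on $\mathbf{MS_{2}^{L}}$; then there is a Lipschitz mediating morphism $f\colon I\to I$. Since a Lipschitz map is in particular a function commuting with the co-algebra structure maps, and $(I,i)$ is the final co-algebra of $F_{2}$ on $\mathbf{Set_{2}}$ (so the mediating morphism is already unique at the set level), $f$ must coincide with the set map forced by commutativity of the square of Example~\ref{ExampleFinalCoalgebra}. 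Hence everything reduces to showing that this forced set map $f$ is not Lipschitz.

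For that I would exploit the self-referential system that commutativity imposes on $f$: it is frozen at $0$ on $[0,\tfrac14]$ and at $1$ on $[\tfrac34,1]$, while on the two middle quarters $f(x)$ equals $\tfrac12 f(4x-1)$ and $\tfrac12\bigl(1+f(4x-2)\bigr)$ respectively, so that $f$ is governed by its behaviour after the expanding substitution $x\mapsto 4x-1$ (resp.\ $x\mapsto 4x-2$). The nested families $I_{n}$ and $J_{n}$ introduced above are precisely the intervals on which iterating this expansion eventually lands in $[0,\tfrac14]$, respectively in $J_{1}$; induction on $n$ then gives $f\equiv 0$ on each $I_{n}$ and $f\equiv 2^{-n}$ on each $J_{n}$, which are properties \ref{cond1} and \ref{cond2}. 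I expect this step — setting up the invariant intervals and checking that $x\mapsto 4x-1$ sends $I_{n+1}$ into $I_{n}$ and $J_{n+1}$ into $J_{n}$, a short computation with the partial geometric sums $\tfrac14+\dots+\tfrac1{4^{n}}$ — to be the only genuinely delicate part of the argument.

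To conclude, I would take the right endpoint $x$ of $I_{n+1}$ and the left endpoint $y$ of $J_{n}$: they lie at distance $2\cdot 4^{-(n+1)}$, yet $f(x)=0$ and $f(y)=2^{-n}$ by \ref{cond1} and \ref{cond2}. Any Lipschitz constant $k$ for $f$ would then have to satisfy $2^{-n}\le k\cdot 2\cdot 4^{-(n+1)}$, i.e.\ $k\ge 2\cdot 2^{n}$ for every $n\in\mathbb{N}$, which is absurd. So $f$ is not Lipschitz, contradicting the assumed finality, and therefore $(I,i)$ is not the final co-algebra of $F_{2}$ on $\mathbf{MS_{2}^{L}}$.
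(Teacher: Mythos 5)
Your proposal is correct and follows essentially the same route as the paper: the same co-algebra $e$, the same reduction to the unique set-level mediating morphism forced by finality in $\mathbf{Set_{2}}$, the same inductive computation of $f$ on the intervals $I_{n}$ and $J_{n}$, and the same pair of points at distance $2\cdot 4^{-(n+1)}$ with values $0$ and $2^{-n}$ yielding the unbounded Lipschitz constant. No gaps.
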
 

\begin{example} 
\emph{
Consider the tri-pointed set $~\triangle=\{(x,0) ~/~ x\in [0~,~1]\}\cup\{(\frac{1}{2},  \frac{\sqrt{3}}{2}) \}~$, whose distinguished elements are given by $~T_{\triangle}=(\frac{1}{2},  \frac{\sqrt{3}}{2})~$ and $~L_{\triangle}=(0,  0)~,~R_{\triangle}=(1,  0)~,~$ and the metric is given by the euclidean metric on $~\mathbb{R}^{2}~$. }
\end{example}

Define $~e^{'}:\triangle\longrightarrow M_{3}\otimes \triangle~$ by 
$$e^{'}(x,y)=\left\{%
\begin{array}{lllll}
a\otimes (\frac{1}{2},  \frac{\sqrt{3}}{2}), &  \hbox{$(x,y)=(\frac{1}{2},  \frac{\sqrt{3}}{2})$;}\\\\
b\otimes (0,0), & \hbox{$x\in [0~\frac{1}{4}] ~\&~ y=0 $;} \\\\
b\otimes (4x-1,0), & \hbox{$x\in [\frac{1}{4}~\frac{1}{2}]~\&~ y=0 $;}\\\\
c\otimes (4x-2,0), & \hbox{$x\in [\frac{1}{2}~\frac{3}{4}]~\&~ y=0 $;}\\\\
c\otimes (1,0), & \hbox{$x\in [\frac{3}{4}~1]~\&~ y=0$.}
\end{array}%
\right.$$

This $~e^{'}~$ is a Lipschitz map with Lipschitz constant 2 and hence  $~(e^{'},\triangle)~$ is an $~F_{3}~$ co-algebra. 

Suppose $~(S,\psi)~$ is the final co-algebra. Then $~(\mathbb{S},\sigma)~$ is also a final co-algebra as they are isomorphic (see \cite{Bhat}).  Now, as in Example \ref{ExampleFinalCoalgebra}, there exists a unique Lipschitz map $~g:\triangle \longrightarrow ~\mathbb{S}$ such that the following diagram commutes.

$$\begin{array}{ccc}
\mathbb{S} & \xleftarrow{~~~~ \sigma^{-1} ~~~~} &  M_{3}\otimes \mathbb{S}\\\\
\uparrow{g} &  & \uparrow{M_{3}\otimes  g}\\\\
\triangle & \overrightarrow{~~~~ e^{'}~~~~} & M_{3}\otimes \triangle\\
\end{array}$$

By commutativity, $~g~$ must satisfy the following condition.\\\\
$~g(x,0)=0 , ~ x \in [0~\frac{1}{4}]~~,~~g(x,0)=1,~x\in [\frac{3}{4}~1]~$ and $$g(x,0)=\left\{%
\begin{array}{ll}
\displaystyle\frac{g(4x-1,0)}{2}, & \hbox{$x\in [\frac{1}{4}~\frac{1}{2}] $;} \\\\
\displaystyle\frac{1+g(4x-2,0)}{2}, & \hbox{$x\in [\frac{3}{4}~1]$.}
\end{array}%
\right.$$
Using these specific properties  of this mediating morphism,   $~g~$ will satisfy the properties given below. \begin{enumerate}[(a)]
	\item $~g(x,0)=0~, x\in I_{n}~,\forall~n\in\mathbb{N}$.
	\item $~g(x,0)=\displaystyle\frac{1}{2^{n}}~,x\in J_{n}~,\forall~n\in\mathbb{N}$.
\end{enumerate} 
 From these properties it follows, as in Example \ref{ExampleFinalCoalgebra}, that $\,g\,$ is not Lipschitz. Hence, neither $~(S,\psi)~$ nor $~(\mathbb{S},\sigma)~$ can be the final co-algebra. 

\begin{proposition}
$~(S,\psi)~$ is not the final co-algebra of $~F_{3}~$ on $~\mathbf{MS_{2}^{L}}~$.
 \end{proposition}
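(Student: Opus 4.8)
The plan is to read the final statement in the only way consistent with its content: the functor is $F_3$, which acts on tri-pointed spaces, so the relevant category is the tri-pointed Lipschitz category (the subscript $2$ in $\mathbf{MS_2^L}$ is a typographical slip for $3$), and $(S,\psi)$ is the final co-algebra of $F_3$ on $\mathbf{MS_3^S}=\mathbf{MS_3^C}$, isomorphic to the Sierpinski gasket co-algebra $(\mathbb{S},\sigma)$. I would prove non-finality by exhibiting a single $\mathbf{MS_3^L}$-object, namely the co-algebra $(\triangle,e')$ built just above, for which the mediating morphism into $(\mathbb{S},\sigma)$ cannot be taken Lipschitz. Because $(\mathbb{S},\sigma)$ forgets to the final co-algebra of $F_3$ on $\mathbf{Set_3}$, there is exactly one set-map $g:\triangle\to\mathbb{S}$ making the defining square commute; if $(\mathbb{S},\sigma)$ were final in $\mathbf{MS_3^L}$ this same $g$ would have to be Lipschitz. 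So it suffices to show $g$ is not Lipschitz.

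Next I would pin down $g$ on the bottom edge $\{(x,0):x\in[0,1]\}$ of $\triangle$. Reading off the commuting square with $e'$ and $\sigma^{-1}$ gives the functional equations $g(x,0)=0$ for $x\in[0,\tfrac14]$, $g(x,0)=1$ for $x\in[\tfrac34,1]$, $g(x,0)=\tfrac12 g(4x-1,0)$ for $x\in[\tfrac14,\tfrac12]$, and $g(x,0)=\tfrac12\bigl(1+g(4x-2,0)\bigr)$ for $x\in[\tfrac34,1]$ --- exactly the self-similar recursion of Example \ref{ExampleFinalCoalgebra}. With the interval families $I_n,J_n$ already defined, I would then prove by induction on $n$ the two properties $g(x,0)=0$ for all $x\in I_n$ and $g(x,0)=\tfrac1{2^n}$ for all $x\in J_n$. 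The base cases and the inductive steps are immediate from the recursion together with the self-similarity $4x-1\in I_n$ for $x\in I_{n+1}$ and $4x-1\in J_n$ for $x\in J_{n+1}$; this is the same induction carried out for $f$ in Example \ref{ExampleFinalCoalgebra}.

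Finally I would derive the contradiction with the Lipschitz hypothesis. Taking $x=\tfrac14+\cdots+\tfrac1{4^{n}}+\tfrac1{4^{n+1}}\in I_{n+1}$ and $y=\tfrac14+\cdots+\tfrac1{4^{n}}+\tfrac3{4^{n+1}}\in J_n$, both points lie on the bottom edge, so their distance in $\triangle$ is the Euclidean $|x-y|=\tfrac{2}{4^{n+1}}$, while their images lie on the bottom edge of $\mathbb{S}$ (isometric to $[0,1]$) at distance $d_{\mathbb{S}}(g(x,0),g(y,0))=\bigl|\tfrac1{2^n}-0\bigr|=\tfrac1{2^n}$. Any Lipschitz constant $k$ for $g$ would force $\tfrac1{2^n}\le k\cdot\tfrac{2}{4^{n+1}}$, i.e. $k\ge 2^{n+1}$ for every $n$, which is impossible. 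Hence $g$ is not Lipschitz, so no Lipschitz mediating morphism from $(\triangle,e')$ to $(\mathbb{S},\sigma)$ exists, and neither $(\mathbb{S},\sigma)$ nor the isomorphic $(S,\psi)$ is final in the tri-pointed Lipschitz category $\mathbf{MS_3^L}$ (printed as $\mathbf{MS_2^L}$ in the statement).

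I expect the main obstacle to be the bookkeeping that makes the two-step argument rigorous rather than the estimate itself: namely, justifying that the set-level mediating morphism is unique (so that a \emph{Lipschitz} mediating morphism could only be this particular $g$), and confirming that $g$ on the bottom edge really does take the stated dyadic values in $\mathbb{S}$ with distances computed along an edge isometric to $[0,1]$, so that the final ratio computation is legitimate. Once these are in place, the induction and the unbounded-slope estimate are routine copies of Example \ref{ExampleFinalCoalgebra}.
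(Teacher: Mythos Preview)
Your proposal is correct and follows essentially the same route as the paper: use the co-algebra $(\triangle,e')$, extract the self-similar recursion for $g$ on the bottom edge, establish the properties on $I_n$ and $J_n$ by the induction of Example~\ref{ExampleFinalCoalgebra}, and conclude that any Lipschitz constant would have to satisfy $k\ge 2\cdot 2^n$ for all $n$. Your explicit appeal to set-level finality to pin down the \emph{unique} candidate $g$ is a welcome clarification that the paper leaves implicit.
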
 
 
 \section{Acknowledgement}
 The authors respectfully acknowledge the guidance and resourcefulness of  Professor Lawrence S. Moss, at Indiana University Bloomington.
 
\section*{References}
\bibliographystyle{model1-num-names}

\end{document}